\documentclass[10pt]{article}
 \usepackage{amsfonts, epsfig, amsmath, amssymb, color,amsthm,mathabx}
\usepackage{mathrsfs}
\usepackage[english]{babel}

\usepackage[round]{natbib}   

\newcommand{\E}{\mathbf{E}}
\renewcommand{\P}{\mathbf{P}}

\usepackage{color}

\renewcommand {\epsilon}{\varepsilon}

\theoremstyle{plain}
\newtheorem{thm}{Theorem}[section]
\newtheorem{lem}[thm]{Lemma}

\newtheorem{exa}[thm]{Example}

\theoremstyle{definition}
\newtheorem{rem}[thm]{Remark}

\usepackage{wrapfig}
\usepackage{tikz}
\usepackage{pgfplots}
\usetikzlibrary{arrows.meta,calc}

\DeclareMathSymbol{\ophi}{\mathalpha}{letters}{"1E}

\newcommand{\e}{\varepsilon}

\renewcommand{\phi}{\varphi}

\newcommand{\be}{\begin{equation}}
\newcommand{\ee}{\end{equation}}
\newcommand{\ben}{\begin{equation*}}
\newcommand{\een}{\end{equation*}}

\newcommand{\ba}{\begin{equation}\begin{aligned}}
\newcommand{\ea}{\end{aligned}\end{equation}}

\DeclareMathOperator{\Var}{Var}

\newcommand{\di}{\mathrm{d}}

\newcommand{\rF}{\mathscr{F}}

\newcommand{\bI}{\mathbb{I}}

\newcommand{\bN}{\mathbb{N}}
\newcommand{\bR}{\mathbb{R}}

\allowdisplaybreaks[4]

\let\oldmarginpar\marginpar
\renewcommand{\marginpar}[1]{\oldmarginpar{\scriptsize\texttt{\color{red}{#1}}}}

\numberwithin{equation}{section}

\begin{document}
\title{Diffusion limits of cyclic finite-velocity random motions\\ along vector fields}

\date{\today}

\author{Olga Aryasova\footnote{Institute of Mathematics, Friedrich Schiller University Jena, Inselplatz 5,
07743 Jena, Germany,
and Institute of Geophysics, National Academy of Sciences of Ukraine, Palladin Ave.\ 32, Kyiv
03680, Ukraine, and
Igor Sikorsky Kyiv Polytechnic Institute, Beresteiskyi Ave.\ 37, Kyiv 03056, Ukraine;
oaryasova@gmail.com},
Ilya Pavlyukevich\footnote{Institute of Mathematics, Friedrich Schiller University Jena,
Inselplatz 5,
07743 Jena, Germany; ilya.pavlyukevich@uni-jena.de}, and
Yusif Sualah\footnote{Institute of Mathematics, Friedrich Schiller University Jena, Inselplatz 5,
07743 Jena, Germany; yusif.sualah@uni-jena.de}}

\maketitle

\begin{abstract}
We investigate diffusion approximations for a class of multivariate inhomogeneous finite-velocity
random motions motivated by models of active particles. A particle alternates cyclically among prescribed velocity fields
$V_1,\dots,V_p$ with random run times and evolves according to either a \emph{flight} dynamics,
consisting of piecewise-linear motion, or a \emph{gliding} dynamics, in which the particle follows the corresponding velocity flow.
Under a Kac-type scaling that couples vanishing run times with diverging particle speed,
we prove weak convergence of both processes to multidimensional diffusions.
It turns out that the order of successive cyclic motions affects the diffusion limit.
In addition to the second-order term generated by fluctuations of the random run times,
the limiting drift contains directional derivatives $DV_i[V_j]$
of the underlying vector fields. In Stratonovich form, part of this drift is expressed through their Lie brackets $[V_i,V_j]$.
Thus, the generic non-commutativity of the microscopic motions survives the diffusive scaling and generates a macroscopic drift
which depends on the cyclic order.
The limiting drift also distinguishes the flight and gliding dynamics, despite their
being driven by the same vector fields and run times.
Several examples, including run-and-reverse motion and cyclic dynamics generated by multiple vector fields,
illustrate how the cyclic switching protocol and the geometry of
the underlying velocity fields shape the limiting diffusions.
\end{abstract}

\noindent
\textbf{Keywords:}  finite-velocity random motion; inhomogeneous telegraph process; active particles;
diffusion approximation; random evolution; weak convergence; It\^o diffusion; Stratonovich diffusion.

\smallskip

\noindent
\textbf{2020 Mathematics Subject Classification:}
60J60, 
60F17, 
60G50,  
82C41,  
82C70  

\begin{footnotesize}\tableofcontents\end{footnotesize}

\section{Introduction}

The present work is motivated by the physical paradigm of active particles,
i.e., particles capable of self-propelled, directed motion. Their dynamics is often
characterized by alternating phases of persistent motion and reorientation.
Typical examples include motile bacteria, sperm cells, and artificial microswimmers.
At the level of a single particle, such dynamics naturally leads to finite-velocity stochastic
models in which the particle moves persistently for a random period of time and subsequently
changes its direction of motion.

The classical and simplest example of such a finite-velocity random motion
is provided by the telegraph process $X$ which describes the one-dimensional motion
of a particle that moves with constant speed $v\in(0,\infty)$ and reverses its direction of
motion at the arrival times of a Poisson process. It is defined by the equation
\ba
\label{e:t1}
X_t = x + v\xi \int_0^t (-1)^{N_s}\,\di s,
\ea
where $N=(N_t)_{t\in[0,\infty)}$ is a Poisson process of intensity $c\in(0,\infty)$,
while $x\in\mathbb{R}$ and $\xi\in\{-1,1\}$ denote the initial position and the initial direction of motion,
respectively. Although the position process $(X_t)_{t\in[0,\infty)}$ is not Markovian,
the joint process $(X_t,(-1)^{N_t})_{t\in[0,\infty)}$ consisting of the position and the velocity is Markov.

A remarkable property of the telegraph process
is that the probability density $p(t,x)$ of the position $X_t$ satisfies the hyperbolic, rather than parabolic,
partial differential equation
\begin{equation}
\label{e:telegraph}
\frac12\frac{\partial^2}{\partial t^2}p
+c\frac{\partial}{\partial t}p
=\frac{v^2}{2}\frac{\partial^2}{\partial x^2}p,
\end{equation}
known as the \emph{telegraph equation}. \cite{goldstein1951diffusion} derived this equation in the
context of diffusion emerging from continuous finite-velocity motion,
addressing a question originally posed by \cite{taylor1922diffusion}. Later, \cite{kac1956some,kac1974stochastic}
introduced the telegraph process $X$ with the help of \eqref{e:t1} to obtain a probabilistic representation of
the solution to \eqref{e:telegraph}.

Goldstein and Kac also observed that, under the scaling
$c,v\to\infty$ with $v^2/c\to\varkappa^2\in(0,\infty)$,
now commonly referred to as \emph{Kac's scaling}, the telegraph equation \eqref{e:telegraph} converges to the heat equation
\begin{equation}
\frac{\partial}{\partial t}p
=\frac{\varkappa^2}{2}\frac{\partial^2}{\partial x^2}p,
\end{equation}
whose fundamental solution is the transition probability density of Brownian motion with variance parameter $\varkappa^2$.
This observation suggests that, under Kac's scaling, the telegraph process itself converges to Brownian motion in an appropriate sense.
Thus, the telegraph process provides a basic example of a finite-velocity microscopic dynamics giving rise,
under rapid direction switching, to an effective diffusive motion.

The clear physical motivation, i.e., the study of a random motion with finite propagation speed, and the mathematical
elegance of the telegraph process have continued to attract considerable attention ever since its introduction.
Over the years, the process $X$ and its modifications have appeared in the literature under a variety of names,
including \emph{telegraph process}, \emph{telegraph random evolution}, \emph{Goldstein--Kac process},
\emph{transport process}, \emph{finite-velocity random motion} (or \emph{flights}),
\emph{velocity-jump process}, \emph{run-and-tumble} (or \emph{run-and-turn}) process,
\emph{persistent random walk}.
Related finite-velocity models are also studied within the framework of \emph{continuous-time random walks}
and, in particular, L\'evy walks, see the survey by \cite{zaburdaev2015levy}.

The development of the telegraph process also stimulated the emergence of the general
theory of \emph{random evolutions}, a term apparently introduced by \cite{griego1969random,griego1971theory}.
Classical references include \cite{hersh1974random}, \cite[Chapter~12]{EthierK-86},
\cite{pinsky1991lectures}, and the monographs by \cite{swishchuk1997random,swishchuk2000random}.
For a comprehensive account of one-dimensional telegraph processes, their probabilistic properties,
and applications, particularly in mathematical finance, we refer the reader to the monograph by
\cite{kolesnik2013telegraph}.

The numerous generalizations of the telegraph process can be roughly divided into several broad classes.

One important direction concerns multidimensional finite-velocity motions in which the direction of travel is random,
typically uniformly distributed over the unit sphere. Already \cite{monin1956statistical} independently
introduced such models to describe particle scattering in homogeneous isotropic media.
Transport processes with finitely many admissible velocities evolving according to
a finite-state Markov chain were investigated in \cite{pinsky1968differential,kurtz1973limit},
while isotropic transport processes on manifolds were studied by \cite{pinsky1976isotropic}.
More recently, random motions in $\mathbb{R}^3$ with orthogonal directions were investigated by \cite{cinque2023random},
multidimensional cyclic motions with $n$ directions in \cite{lachal2006cyclic,lachal2006minimal},
cyclic four-directional motions by \cite{leorato2003alternating,orsingher2004cyclic,iuliano2024cyclic},
and more general multidimensional models by \cite{cinque2024multidimensional}.

Another central research theme concerns diffusion approximations. A rigorous proof of the weak convergence of the telegraph
process \eqref{e:t1} to Brownian motion under Kac's scaling is given in Chapter~12.1 of \cite{EthierK-86}.
Quantitative convergence estimates in Wasserstein distance were recently obtained by \cite{barrera2023quantitative},
while weak convergence in the more general semi-Markov setting was established by \cite{pedicone2026weak}.
Weak diffusion limits of various types of multivariate transport processes
were subsequently established by
\cite{watanabe1968weak,watanabe1970convergence,pinsky1968differential,kurtz1973limit,
tutubalin1967central,gorostiza1972central,gorostiza1973invariance,gorostiza1975convergence,othmer1988models,
othmer2000diffusion,othmer2002diffusion,hillen2016diffusion}.
Strong approximation results include the almost sure convergence obtained by \cite{Griego71}
via the Skorokhod embedding theorem and multivariate strong approximations developed by \cite{bashtova2025strong}.

Finally, a further line of research concerns inhomogeneous telegraph evolutions. Time-in\-ho\-mo\-ge\-neous switching
rates were first considered by \cite{kaplan1964differential}, who derived the corresponding
time inhomogeneous telegraph equation. Random evolutions with state-dependent velocities
were discussed in the survey of \cite{hersh1974random}. Telegraph processes with time and space-dependent velocities
and their diffusion limits were analysed in dimensions one and two by \cite{ratanov1999telegraph,orsingher2008random,orsingher2026one}.
\cite{benaim2015qualitative,benaim2019random} studied inhomogeneous random evolutions in the framework of
piecewise deterministic Markov processes.

The present work lies at the intersection of the latter two areas and is devoted to diffusion
approximations for multivariate cyclic finite-velocity random motions along vector fields.
The cyclic and inhomogeneous motions considered here may be viewed as models of active particles
moving in spatially inhomogeneous environments determined by vector fields $V_1,\ldots,V_p$ and changing
their directions according to a prescribed cyclic reorientation rule. We distinguish between two types
of motion: \emph{gliding} along the vector fields and straight \emph{flights} with the
initial velocity of each run determined by the corresponding vector field. The random run times may
have general $(2+\rho)$-integrable distributions and, in particular, need not be exponentially
distributed, placing the model in a non-Markovian renewal setting.

The order of the vector fields together with the distributions of the run times determines the
microscopic switching dynamics, whereas the diffusion approximation under Kac's scaling and a
centering condition provides an effective description on the macroscopic scale. Besides obtaining
explicit formulas for the limiting diffusions, a principal feature of our results is that the \emph{ordering}
of the cyclic dynamics survives in the diffusion limit through an additional drift determined by directional
derivatives $DV_i[V_j]$ of the underlying vector fields. In the Stratonovich representation, these terms give rise,
in particular, to Lie-bracket contributions $[V_i,V_j]$. Thus, the non-commutativity and orientation of
the microscopic cyclic dynamics may remain visible at the macroscopic scale.

This effect is illustrated in Example~\ref{exa:UV}. We consider motions generated by two vector
fields $U$ and $V$ and their opposites. The ``counterclockwise'' cyclic ordering
$(U,V,-U,-V)$
and the ``clockwise'' ordering
$(U,-V,-U,V)$
produce the same quadratic fluctuations but opposite Lie-bracket components in the limiting drift.
Moreover, the flight and gliding dynamics generally give rise to different limiting drifts,
despite being driven by the same vector fields and run times.

The diffusion approximation obtained here can also be viewed from the perspective of stochastic homogenization.
The short successive run times introduce a rapidly varying microscopic time scale,
while Kac's scaling simultaneously increases the magnitude of the velocities.
The centering condition eliminates the first-order averaged motion, and the resulting diffusion
can therefore be interpreted as a second-order effective dynamics of the underlying cyclic
random evolution. In contrast to a purely averaging regime, the effective dynamics retains
information about the ordered composition of the spatially dependent vector fields through the drift terms described above.

The appearance of order-dependent Lie-bracket drift also bears a connection to second-order (area)
corrections arising in Wong--Zakai approximations and rough-path invariance principles.
In particular, microscopic ordering produces second-order geometric information that survives the passage to the limit
and acts through Lie brackets of the driving vector fields. Related effects arise in
Wong--Zakai approximation limits, see \cite[Theorem 7.2 in Chapter 7]{IW89},
and in the area-anomaly phenomenon; see, e.g., \cite{lopusanschi2020area,diamantakis2025levy}.
We mention this connection only as an interpretation of the order-dependent drift; the convergence
results of the present paper are established by classical weak-convergence methods.

The paper is organized as follows.
In section \ref{s:setting}, we introduce the model, state the assumptions and main results, and
discuss several examples illustrating the influence of cyclic ordering and the geometry
of the vector fields on the limiting diffusion. Sections \ref{s:Awrc}
through \ref{s:Aconv} are devoted to the proof of the diffusion approximation for the flight dynamics,
while the corresponding result for the gliding dynamics is established in section \ref{s:gliding}.

Throughout this paper, we use the following notation.
The norm and scalar product in Euclidean spaces are denoted by $|\cdot|$ and
$\langle\cdot,\cdot\rangle$, respectively.
For a function $f=f(x)$, $\|f\|$ denotes
its supremum norm. The gradient and the Hessian matrix of a real-valued function $f$ are denoted by $\nabla f$ and $\nabla^2f$,
respectively.
For two nonnegative real-valued functions $f$ and $g$, we write $f(x)\leq_C g(x)$ if there exists a constant $C\in (0,\infty)$ such that
$f(x)\leq C g(x)$ for all $x$, and this constant will not be referenced further.
When $f$ and $g$ depend on additional parameters (e.g., $t$, $h$, etc.)
the inequality is assumed to hold uniformly over those parameters.

\section{Inhomogeneous multivariate finite-velocity motions: setting and main results\label{s:setting}}

We consider an idealized model of an active particle (microswimmer) performing cyclic motions in a spatially inhomogeneous environment.
Let the state space be $d$-dimensional, $d\in\mathbb N$,
let $p\in\mathbb{N}$ denote the \emph{period} of the cyclic motion, and
let $V_1,\dots, V_{p}\colon \bR^d\to \bR^d$ be the vector fields describing the possible directions of motion.
The travel times (run times) related to the vector fields $V_1,\dots, V_{p}$ are
determined by independent copies of positive random variables $\tau_1,\dots,\tau_p$.

We distinguish between two types of motion: \emph{flights}, corresponding to straight motion
with velocity frozen at the beginning of each run, and \emph{gliding},
corresponding to motion along the flow generated by the current vector field, see Fig.~\ref{f:paths}.

1. Flights.
Assume that the particle starts at $t=0$ at position $X_0=x$ with the initial ``direction'' $n_0=\iota\in\{1,\dots,p\}$.
Then, it moves with the constant velocity $ V_\iota(x)$ during the random time interval $T_1\stackrel{\di}{=}\tau_\iota$.
This motion is straight, and the particle's velocity remains constant throughout the first run and is determined by
$V_\iota(x)$.
At the time instant $T_1$, the particle changes its velocity
and continues moving according to the law
\ba
X_t&=X_{T_1}+V_{n_1}(X_{T_1})(t-T_1),\quad  t\in[T_1,T_2),\\
\ea
where
$n_1=1+ \iota \text{ mod}\,p$
and
$T_2-T_1\stackrel{\di}{=}\tau_{n_1}$ is independent of $T_1$.

Overall, the flight dynamics has the form
\ba
&X_0=x\in\bR^d,\quad n_0=\iota \in \{1,\dots, p\},\quad T_0=0,\\
&n_k= 1+(\iota + k-1)  \,\mathrm{ mod}\, p,\\
&T_{k+1}-T_k\stackrel{\di }{=}\tau_{n_{k}} \text{ are independent random variables},\\
&X_{t} = X_{T_k} + V_{n_k}(X_{T_k}) (t-T_k),\quad t\in[T_k,T_{k+1}), \quad  k\in\mathbb N_0.
\ea
Here, the process $n=(n_k)_{k\in\mathbb N_0}$ parameterizes the cyclic motion
and $V_{n_k}(\cdot)$ is the velocity field on step $k$.

2. Gliding. In this case, the particle's motion is not straight
but follows the velocity current. More precisely, the gliding dynamics has the form
\ba
&Y_0=y\in\bR^d,\quad n_0=\iota \in \{1,\dots, p\},\quad T_0=0,\\
&n_k= 1+(\iota + k-1)  \,\mathrm{ mod}\, p,\\
&T_{k+1}-T_k\stackrel{\di }{=}\tau_{n_{k}} \text{ are independent random variables},\\
&Y_{t} = Y_{T_k} + \int_{T_k}^tV_{n_{k}} (Y_s)  \,\di s ,\quad t\in[T_k,T_{k+1}), \quad  k\in\mathbb N_0.
\ea

\begin{figure}
\begin{center}
 \includegraphics{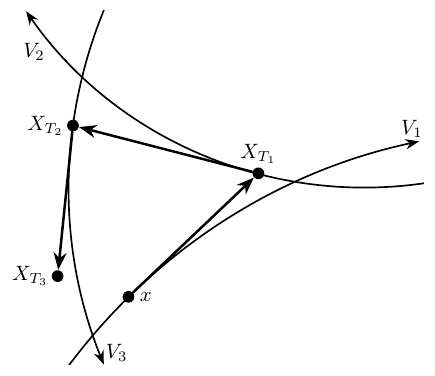}\hspace{.5cm}
\includegraphics{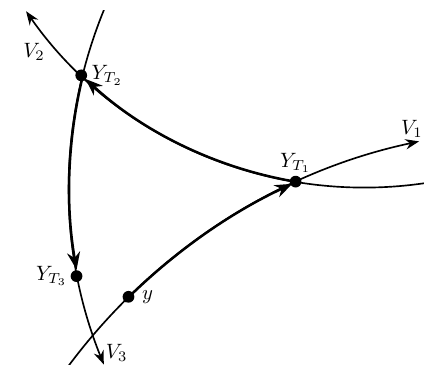}
\end{center}
\caption{Schematic trajectories of the flight (left) and gliding (right) dynamics generated by three vector fields.\label{f:paths}}
\end{figure}

\begin{exa}[one-dimensional inhomogeneous telegraph evolutions]
\label{exa:1}
Let $V\colon \bR\to\bR$, $V_1=V$, $V_2=-V$, $N=(N_t)_{t\in [0,\infty)}$
be a Poisson process of intensity $c\in (0,\infty)$, and
let $(T_k)_{k\in\mathbb N_0}$ be its arrival times.
Then, $X_0=x$,
\ba
X_{t} = X_{T_k} + (-1)^{\iota + k-1} V(X_{T_k}) (t-T_k),\quad t\in[T_k,T_{k+1}), \quad  k\in\mathbb N_0,
\ea
is the inhomogeneous one-dimensional ``flight'' telegraph evolution, $\iota\in\{1,2\}$ parameterizing the initial direction of motion.

Analogously,
\ba
Y_{t} = y + (-1)^{\iota-1} \int_0^t V(Y_s)(-1)^{N_s}\,\di s
\ea
is inhomogeneous one-dimensional ``gliding'' telegraph evolution, as considered in \cite{ratanov1999telegraph}.

Note that if $V(x)\equiv v\in (0,\infty)$, the evolutions $X$ and $Y$ coincide with the process \eqref{e:t1} with
$\xi=(-1)^{\iota-1}$.
\end{exa}

In general, the dynamics of $X$ and $Y$ may be very complicated.
Our goal is to study the diffusion limits of $X$ and $Y$ in a regime
where the particle speed increases while the run times $T_{k+1}-T_k$, $k\in\mathbb N_0$, become short.

We make the following assumptions on the building blocks of the systems.

\noindent
$\mathbf{A}_V$: Let $p\in\mathbb N$. The vector fields $V_1,\dots,V_{p}$ are bounded with bounded first and second derivatives, i.e.,
\ba
V_i\in C_b^2(\bR^d,\bR^{d}),\quad i=1,\dots,p.
\ea
We denote $\|V\|=\max\{\|V_1\|,\dots,\|V_p\|\}$ and $\|\nabla V\|=\max\{\|\nabla V_1\|,\dots,\|\nabla V_p\|\}$.

\noindent
$\mathbf{A}_\tau$:
The random variables $\tau_1,\dots,\tau_{p}$ are positive and the Lyapunov condition
$\E \tau_i^{2+\rho}<\infty$, $i=1,\dots, p$, is satisfied for some $\rho\in (0,1)$.
We denote
\ba
\E \tau_i   &=\mu_i\in (0,\infty),\\
\E \tau_i^2 &=\lambda_i^2\in (0,\infty),\\
\Var \tau_i &=\lambda_i^2-\mu_i^2=\sigma_i^2\in[0,\infty),\quad  i=1,\dots, p.
\ea
We also set
\ba
\mu:=\mu_1+\cdots+\mu_{p}.
\ea

In order to prevent blow-ups when passing to the diffusion limit, we make the following
balance assumption on the vector fields $V_1,\dots,V_{p}$ and mean travel times  $\mu_1,\dots,\mu_{p}$.

\noindent
$\mathbf{A}_{V,\mu}$:
The vector fields $V_1,\dots,V_{p}$ satisfy
\ba
\label{e:Vmu}
\mu_1 V_1(x)+\cdots+\mu_{p}V_{p}(x)=0,\quad x\in\mathbb R^d.
\ea

For the constant vector fields $V_i(x)\equiv V_i$, $i=1,\dots,p$,
condition \eqref{e:Vmu} means that the expected displacement over one complete cycle vanishes:
\ba
\E\Big[V_1\tau_1 +\cdots+V_{p}\tau_{p} \Big]=\sum_{i=1}^p \mu_i V_i=0.
\ea
Thus, \eqref{e:Vmu} is a natural centering condition eliminating the first-order averaged motion.

\begin{exa}
At first sight, assumption $\mathbf{A}_{V,\mu}$ may appear rather restrictive and applicable
only to a narrow class of vector fields $V_1,\dots,V_p$. The following example shows that it naturally
encompasses a broad family of models.

Let $V\in C_b^2(\mathbb R^d,\mathbb R^{d\times l})$ be a matrix-valued function, and let
$\xi_1,\dots,\xi_p\in\mathbb R^l$ be fixed vectors, which we refer to as \emph{controls}. At each step,
the particle evolves according to the vector field
\ba
V_i(x):=V(x)\xi_i,\quad i=1,\dots,p.
\ea
In this setting, assumption $\mathbf{A}_{V,\mu}$ is equivalent to the following condition on the control vectors:
\ba
\mu_1\xi_1+\cdots+\mu_p\xi_p=0.
\ea
Thus, the centering condition can be imposed at the level of the finite collection of controls
$\xi_1,\dots,\xi_p$ independently of the spatial dependence encoded by $V(x)$.
\end{exa}

Let $\e,\delta\in(0,1]$ be the small parameters responsible for the space-time scaling of the evolutions $X$ and $Y$. Then,
we consider the scaled flight dynamics:
\ba
\label{e:Xe}
&X_0^{\e,\delta}=x\in\bR^d,\quad \iota \in \{1,\dots, p\},\quad T^\e_0=0,\\
&n_k=1+ (\iota + k-1)  \,\mathrm{ mod}\, p,\\
&T^\e_{k+1}-T^\e_k\stackrel{\di }{=}\e \tau_{n_{k}},\\
&X_{t}^{\e,\delta} = X^{\e,\delta}_{T_k^\e}
+ \frac{1}{\delta}V_{n_{k}}(X^{\e,\delta}_{T_k^\e}) (t-T^\e_k),\quad t\in[T^\e_k,T^\e_{k+1}),
\quad  k\in\mathbb N_0,
\ea
and the scaled gliding dynamics
\ba
&Y_0^{\e,\delta}=y\in\bR^d,\quad \iota \in \{1,\dots, p\},\quad T^\e_0=0,\\
&n_k=1+ (\iota + k-1)  \,\mathrm{ mod}\, p,\\
&T^\e_{k+1}-T^\e_k\stackrel{\di }{=}\e \tau_{n_{k}},\\
&Y_{t}^{\e,\delta} = Y^{\e,\delta}_{T_k^\e} + \frac{1}{\delta}\int_{T_k^\e}^t V_{n_{k}} (Y^{\e,\delta}_s) \,\di s ,
\quad t\in[T_k^\e,T_{k+1}^\e), \quad  k\in\mathbb N_0.
\ea
In both models, the velocity is of order $\mathcal{O}(1/\delta)$, while the duration of each run is of order $\mathcal{O}(\e)$.
Hence, the displacement during a single run is of order $\mathcal{O}(\e/\delta)$. Under the centering condition \eqref{e:Vmu},
the first-order contributions cancel over a complete cycle, and a non-trivial fluctuation limit is obtained under
the Kac-type scaling. This leads to the following assumption.

\noindent
$\mathbf{A}_\varkappa$: There is $\varkappa\in (0,\infty)$ such that there is a limit
\ba
\lim_{\e,\delta\downarrow 0 }\frac{\e}{\delta^2}=\varkappa^2.
\ea
Note that under assumption $\mathbf{A}_\varkappa$ we have
\ba
\frac{\e}{\delta}= \delta \frac{\e}{\delta^2}=\mathcal O(\delta)=\mathcal O(\sqrt{\e}),\quad \e,\delta\downarrow 0 .
\ea

For the formulation of the main result we will need the directional derivatives $D V_i[V_j]=DV_i(x)V_j(x)$ of the vector fields $V_i$
in direction $V_j$ defined as
vector fields with coordinates
\ba
\label{e:DVV}
(D V_i[V_j])^\alpha = \sum_{\beta=1}^d V_j^\beta   \cdot  \partial_\beta V_i^\alpha,\quad \alpha=1,\dots, d,\  1\leq i,j\leq p,
\ea
and the Lie bracket $[V_i,V_j]$ defined as
\ba
{} [V_i,V_j]&= D V_j [ V_i] - D V_i [ V_j],\quad 1\leq i,j\leq p,\\
([V_i,V_j])^\alpha &
= \sum_{\beta=1}^d  \Big(V_i^\beta  \cdot  \partial_\beta V_j^\alpha
- V_j^\beta \cdot \partial_\beta V_i^\alpha\Big) ,\quad \alpha=1,\dots, d,\  1\leq i,j\leq p.
\ea

The main results are presented in the following theorems.

\begin{thm}
\label{t:A}
Suppose that assumptions
$\mathbf{A}_V$,
$\mathbf{A}_\tau$,
$\mathbf{A}_{V,\mu}$ and
$\mathbf{A}_\varkappa$ hold. Then, for any initial values $x\in\bR^d$ and $\iota\in \{1,\dots,p\}$,
the flight evolutions $\{X^{\e,\delta}\}$ converge weakly in $C(\bR_+,\bR^d)$
to a diffusion $X$ starting at $x$ with the generator
\ba
\label{e:opA}
Af
= \frac{\varkappa^2}{2\mu}  \sum_{i} \sigma_{i}^2 \langle \nabla^2 f  V_i, V_i\rangle
+ \frac{\varkappa^2}{\mu} \sum_{j< i} \mu_i\mu_j \langle \nabla f , D V_i [ V_j] \rangle,\quad f\in C^3_b(\bR^d,\bR).
\ea
The diffusion $X$ is a solution to an It\^o-SDE
\ba
\label{e:SDEX}
\di X_t= \frac{\varkappa}{\sqrt\mu}  \sum_{i} \sigma_{i} V_i(X_t)\,\di W^i_t +
\frac{\varkappa^2}{\mu} \sum_{j< i} \mu_i\mu_j  D V_i [ V_j](X_t)\,\di t,
\ea
where $W^1,\dots,W^p$ are independent standard one-dimensional Brownian motions.
The SDE \eqref{e:SDEX} can be rewritten in the Stratonovich form
as
\ba
\label{e:XStrat}
\di X_t= \frac{\varkappa}{\sqrt\mu}  \sum_{i} \sigma_{i} V_i(X_t)\circ\di W^i_t
-\frac{\varkappa^2}{2\mu} \Big(
\sum_{i} \lambda_i^2  D V_i [ V_i](X_t)
+\sum_{j< i} \mu_i\mu_j  [ V_i,V_j](X_t)
\Big)\,\di t.
\ea
\end{thm}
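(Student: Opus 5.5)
Our plan is to reduce to a Markov chain embedded at the cycle completion times and then apply a classical martingale-problem criterion for weak convergence of Markov chains to diffusions (e.g.\ Ethier--Kurtz, Theorem~7.4.1, or Stroock--Varadhan). Without loss of generality let $\iota=1$; the first incomplete cycle moves the particle by $\mathcal O(\e/\delta)=\mathcal O(\sqrt\e)$ and is negligible. Define the cycle times $S_m:=T^\e_{mp}$ and the chain $Z_m:=X^{\e,\delta}_{S_m}$. Each increment $Z_{m+1}-Z_m$ is obtained from $Z_m$ by composing $p$ straight flights $x\mapsto x+\frac{\e}{\delta}\tau_i V_i(x)$ for $i=1,\dots,p$, with fresh independent $\tau_i$. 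Hence $(Z_m)$ is a time-homogeneous Markov chain. Between cycle times the path deviates from $Z_m$ by at most $p\|V\|\frac{\e}{\delta}\max_i\tau_i$. By the $(2+\rho)$-moments and a union bound over the $\mathcal O(1/\e)$ cycles in $[0,T]$, the maximum of these deviations tends to $0$ in probability. Therefore it suffices to prove convergence of the interpolated chain.

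\textbf{Step 1 (one-cycle expansion).} Set $h=\e/\delta$ and Taylor-expand the composition to second order in $h$:
\[
Z_{m+1}-Z_m = h\sum_i \tau_i V_i(Z_m) + h^2\sum_{j<i}\tau_i\tau_j\, DV_i[V_j](Z_m) + R,
\]
with $|R|\leq_C h^3(\sum_i\tau_i)^3$. Truncating the $\tau$'s and using the Lyapunov moments gives $\E|R|\wedge 1=o(h^2)$. Taking expectations and using $\mathbf{A}_{V,\mu}$, the first-order term has mean zero. The conditional mean is therefore $h^2\sum_{j<i}\mu_i\mu_j DV_i[V_j]+o(h^2)$. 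Because the vector fields are centered, the conditional covariance is $h^2\sum_i\sigma_i^2 V_iV_i^\top+o(h^2)$. We also need a Lindeberg-type bound, $\E|Z_{m+1}-Z_m|^{2+\rho}=\mathcal O(h^{2+\rho})=o(h^2)$, which follows from $\mathbf{A}_\tau$.

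\textbf{Step 2 (time change).} One cycle lasts $\e\sum_i\tau_i$, with mean $\e\mu$. By the law of large numbers for the renewal counting process $M(t)=\max\{m: S_m\leq t\}$, we have $\sup_{t\leq T}|\e M(t)-t/\mu|\to0$ in probability; this can be made uniform via Doob's maximal inequality for $S_m-\e\mu m$. The chain run on the grid $t=m\e\mu$ has drift per unit time $h^2/(\e\mu)\cdot(\ldots)$. Since $h^2/\e=\e/\delta^2\to\varkappa^2$, this yields the generator $A$ of \eqref{e:opA}. Tightness in $C$ follows from the moment bounds, and the limit martingale problem for $A$ is well posed because its coefficients are $C^1_b$, so the SDE has a unique strong solution. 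The chain therefore converges to $X$. Composing with the random time change $M(t)$, which converges to a deterministic continuous function, gives convergence of $X^{\e,\delta}$ by a standard continuous-mapping/random-time-change lemma.

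\textbf{Step 3 (SDE forms).} The It\^o form \eqref{e:SDEX} is read off from $A$, since $\frac{\varkappa^2}{\mu}\sum\sigma_i^2V_iV_i^\top$ is the diffusion matrix. For the Stratonovich form we subtract the correction $\frac{\varkappa^2}{2\mu}\sum_i\sigma_i^2 DV_i[V_i]$. We then apply the algebraic identity obtained by applying $D(\cdot)[V_k]$ to $\sum\mu_iV_i=0$, namely $0=\sum_{i,j}\mu_i\mu_jDV_i[V_j]$, which gives
\[
\sum_{j<i}\mu_i\mu_j(DV_i[V_j]+DV_j[V_i])=-\sum_i\mu_i^2DV_i[V_i].
\]
With $\sigma_i^2=\lambda_i^2-\mu_i^2$, this rewrites the drift as $-\frac{\varkappa^2}{2\mu}\big(\sum\lambda_i^2DV_i[V_i]+\sum_{j<i}\mu_i\mu_j[V_i,V_j]\big)$.

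The main obstacle is Step 1 together with the time change: we must control the remainder and the non-Markovian renewal times using only $(2+\rho)$ moments, uniformly in $x$. We would first try truncating $\tau_i$ at level $\e^{-1/2+\eta}$ and bounding the truncation error by Markov's inequality.
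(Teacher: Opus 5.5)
Your argument is correct, but it takes a genuinely different route from the paper.

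\textbf{The paper's route.} It never passes to an embedded chain. Instead it works with the piecewise-constant process $Z^{\e,\delta}$ sampled at \emph{every} arrival time, and proceeds in four stages:
\begin{enumerate}
\item It proves tightness by hand, via compact containment and Aldous' criterion. The key tool is a martingale/compensator decomposition in which increments are grouped into complete cycles so that $\mathbf{A}_{V,\mu}$ kills the leading term. This is supported by Spouge's overshoot bound and an estimate on $\E N^\e_t$.
\item It computes the limit of the $p$-step pseudo-characteristic operator by Taylor-expanding $f$. This first produces $\lambda_i^2$ and Hessian cross terms, and the centering identity is then used a second time to turn $\lambda_i^2$ into $\sigma_i^2$.
\item It checks separately, by an explicit algebraic computation, that the limit is invariant under cyclic shifts of $\iota$.
\item It verifies an approximate martingale problem in continuous time. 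It cuts $[s,t]$ at cycle renewal epochs and controls the boundary pieces with Lorden's excess inequality.
\end{enumerate}

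\textbf{Your route.} You observe that the positions at cycle completions form a genuinely time-homogeneous Markov chain (iterated random maps). You expand the composed one-cycle map itself to second order. You then let the Ethier--Kurtz Markov-chain criterion (Corollary~7.4.2) deliver tightness and identification at once. Renewal theory enters only through a finite-variance law of large numbers for $T^\e_{mp}$ and a random time change. Independence of the limit from $\iota$ comes for free from discarding the initial incomplete cycle, rather than from the cyclic-shift computation.

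\textbf{What each buys.} Your approach is shorter and more modular. The paper's is self-contained, gives estimates uniform in $(x,\iota)$ directly for the continuous-time process, and makes the cyclic-shift invariance explicit. Both adapt to the gliding case by changing only the one-cycle Taylor step.

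\textbf{A small simplification.} The truncation in your Step~1 is unnecessary. With $s=\sum_i\tau_i$ you have pointwise
$|R|\le C\min\{h^3s^3,\,hs+h^2s^2\}\le C(hs)^{2+\rho}$,
so $\E|R|=O(h^{2+\rho})=o(h^2)$ directly. The same interpolation handles the $h^3$ cross terms in the second moment and the truncation at $|y-x|\le 1$ required by the Ethier--Kurtz coefficients.
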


\begin{thm}
\label{t:B}
Suppose that assumptions
$\mathbf{A}_V$,
$\mathbf{A}_\tau$,
$\mathbf{A}_{V,\mu}$ and
$\mathbf{A}_\varkappa$ hold. Then, for any initial values $y\in\bR^d$ and $\iota\in \{1,\dots,p\}$,
the gliding evolutions $\{Y^{\e,\delta}\}$ converge weakly in $C(\bR_+,\bR^d)$
to a diffusion $Y$ starting at $y$ with the generator
\ba
\label{e:opB}
Bf
&=\frac{\varkappa^2}{2\mu} \sum_{i}  \sigma_{i}^2  \langle \nabla^2 f V_i ,V_i\rangle
+ \frac{\varkappa^2}{2\mu}\sum_{i} \lambda_{i}^2 \langle \nabla f, DV_i[V_i]\rangle
+\frac{\varkappa^2}{\mu}\sum_{j<i} \mu_i\mu_j \langle \nabla f, DV_i[V_j] \rangle
,\quad f\in C^3_b(\bR^d,\bR).
\ea
The diffusion $Y$ is a solution to an It\^o-SDE
\ba
\label{e:SDEY}
\di Y_t= \frac{\varkappa}{\sqrt\mu}  \sum_{i} \sigma_{i} V_i(Y_t)\,\di W^i_t +
\frac{\varkappa^2}{\mu} \Big(
\frac12\sum_{i} \lambda_{i}^2 DV_i[V_i](Y_t)
+\sum_{j< i} \mu_i\mu_j  D V_i [ V_j](Y_t)
\Big)\,\di t,
\ea
where $W^1,\dots,W^p$ are independent standard one-dimensional Brownian motions.
The SDE \eqref{e:SDEY} can be rewritten in the Stratonovich form
as
\ba
\label{e:SDEYStrat}
\di Y_t= \frac{\varkappa}{\sqrt\mu}  \sum_{i} \sigma_{i} V_i(Y_t)\circ\di W^i_t
+\frac{\varkappa^2}{2\mu} \sum_{j< i} \mu_i\mu_j  [ V_i,V_j](Y_t)\,\di t.
\ea
\end{thm}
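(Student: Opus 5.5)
The plan is to rerun the weak-convergence scheme used for Theorem~\ref{t:A} in sections \ref{s:Awrc}--\ref{s:Aconv}. The only new input is the one-run expansion of the gliding flow, which produces an additional second-order term. I treat $\E\tau_i^{2+\rho}<\infty$ from $\mathbf{A}_\tau$ as the source of all moment bounds.

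\emph{Reduction to a cycle chain.} Let $\Phi^i_s$ denote the flow of $V_i$. Over one run of length $\e\tau_i$, the gliding particle moves from $y$ to $\Phi^i_{\e\tau_i/\delta}(y)$. Taylor expansion in the flow time gives
\begin{equation*}
\Phi^i_{\e\tau_i/\delta}(y)=y+\frac{\e\tau_i}{\delta}V_i(y)+\frac12\Big(\frac{\e\tau_i}{\delta}\Big)^2 DV_i[V_i](y)+R_i,\qquad |R_i|\leq_C \Big(\frac{\e\tau_i}{\delta}\Big)^3 .
\end{equation*}
This uses $V_i\in C^2_b$. The flight step is exactly the first two terms. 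Composing the $p$ runs of one full cycle yields a Markov chain $Z^{\e}_m$ of positions at the cycle-completion times. For $f\in C^3_b$, I would expand $f(Z_{m+1})-f(Z_m)$ exactly as in the flight case, keeping all terms of order $(\e/\delta)^2=\mathcal O(\e)$:
\begin{itemize}
\item The first-order terms have conditional mean $(\e/\delta)\sum_i\mu_iV_i=0$ by $\mathbf{A}_{V,\mu}$.
\item The cross terms, obtained by evaluating $V_i$ at the displaced point after runs $j<i$, give $(\e/\delta)^2\sum_{j<i}\mu_i\mu_j DV_i[V_j]$ (up to the cyclic relabelling that starts at $\iota$, which contributes nothing in the limit).
\item The quadratic terms give $\tfrac12(\e/\delta)^2\sum_{i,j}\E[\tau_i\tau_j]\langle\nabla^2 fV_i,V_j\rangle$. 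By centering, this reduces to $\tfrac12(\e/\delta)^2\sum_i\sigma_i^2\langle\nabla^2 fV_i,V_i\rangle$.
\item The new contribution relative to the flight case is $\tfrac12(\e/\delta)^2\sum_i\lambda_i^2\langle\nabla f,DV_i[V_i]\rangle$.
\end{itemize}
Dividing by the mean cycle duration $\e\mu$ and using $\mathbf{A}_\varkappa$, one gets
\begin{equation*}
\E\big[f(Z_{m+1})-f(Z_m)\,\big|\,Z_m=z\big]=\e\mu\,Bf(z)+o(\e)
\end{equation*}
uniformly in $z$, with $B$ as in \eqref{e:opB}.

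\emph{Remainders, tightness, and passage to continuous time.} The main obstacle is controlling the remainders uniformly despite unbounded $\tau_i$, since $(\e\tau/\delta)^3$ need not be integrable. I would use the two-sided bound $|R_i|\leq_C\min\{(\e\tau_i/\delta)^2,(\e\tau_i/\delta)^3\}\leq (\e\tau_i/\delta)^{2+\rho}$. The same bound applies to the third-order Taylor remainder of $f$. Both are then $\mathcal O(\e^{1+\rho/2})=o(\e)$ by the Lyapunov condition. Conditional second and $(2+\rho)$-th moments of the increments are likewise $\mathcal O(\e)$ and $\mathcal O(\e^{1+\rho/2})$. This gives tightness of the interpolated chain, via the same criterion as in the flight proof, and identifies every limit point as a solution of the martingale problem for $B$. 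That martingale problem is well posed because the coefficients are $C^1_b$/$C^2_b$.

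To return to $Y^{\e,\delta}$ in real time, I would use the renewal LLN: the number of completed cycles up to time $t$ is $t/(\e\mu)+o(1/\e)$ uniformly on compacts. I would also bound the intra-cycle oscillation by $\|V\|\,(\e/\delta)\max_{k\le C/\e}\tau_{n_k}$. Since $\E\tau^{2+\rho}<\infty$, the maximum is $o(\e^{-1/(2+\rho)})$ in probability, so the oscillation is $o(\e^{1/2-1/(2+\rho)})\to0$. These are exactly the arguments of the flight case, and I would simply invoke them.

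\emph{SDE forms.} The It\^o form \eqref{e:SDEY} is read off from $B$ with diffusion coefficients $\frac{\varkappa}{\sqrt\mu}\sigma_iV_i$. For the Stratonovich form, I would subtract the correction $\frac{\varkappa^2}{2\mu}\sum_i\sigma_i^2DV_i[V_i]$ and use $\sigma_i^2=\lambda_i^2-\mu_i^2$. I would then use the identity $\sum_i\mu_i^2DV_i[V_i]+\sum_{j<i}\mu_i\mu_j(DV_i[V_j]+DV_j[V_i])=0$, obtained by differentiating \eqref{e:Vmu} in all directions $V_j$. The $\lambda_i^2$-terms cancel and only the antisymmetric combination of the $DV_i[V_j]$ survives, which is a multiple of $[V_i,V_j]$. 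This is the same algebra as in \eqref{e:XStrat}. When writing this out I will check the overall sign against the bracket convention $[V_i,V_j]=DV_j[V_i]-DV_i[V_j]$. A quick pass gives the coefficient $-\frac{\varkappa^2}{2\mu}\sum_{j<i}\mu_i\mu_j[V_i,V_j]$, i.e.\ the bracket term of \eqref{e:XStrat}. That is opposite to the sign printed in \eqref{e:SDEYStrat}, so this needs to be reconciled.
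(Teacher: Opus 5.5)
Your proof is correct, and the sign you flagged is an error in the printed statement, not in your algebra. The paper only says the SDE derivation ``proceeds analogously'' and never writes it out. Subtracting the It\^o--Stratonovich correction $\frac{\varkappa^2}{2\mu}\sum_i\sigma_i^2DV_i[V_i]$ from the drift in \eqref{e:SDEY} leaves $\frac{\varkappa^2}{\mu}\bigl(\frac12\sum_i\mu_i^2DV_i[V_i]+\sum_{j<i}\mu_i\mu_jDV_i[V_j]\bigr)$. Then \eqref{e:DL} turns this into $-\frac{\varkappa^2}{2\mu}\sum_{j<i}\mu_i\mu_j[V_i,V_j]$, which is exactly the bracket term of \eqref{e:XStrat}.

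The paper's own Example~\ref{exa:UV} confirms your sign. For the ordering $(U,V,-U,-V)$, removing the correction $\frac{\varkappa^2}{4c}(DU[U]+DV[V])$ from the drift of $B^{(3)}$ leaves $+\frac{\varkappa^2}{4c}[U,V]$. The printed \eqref{e:SDEYStrat} would give $-\frac{\varkappa^2}{4c}[U,V]$. A local sanity check also agrees: take $U=\partial_x$ and $V=x\partial_y$ in $\bR^2$, and glide along $U,V,-U,-V$ for flow time $s$ each. This moves the point by $s^2\partial_y=s^2[U,V]$, and since all $DV_i[V_i]$ vanish, the It\^o and Stratonovich drifts coincide and point along $+[U,V]$. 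So \eqref{e:opB} and \eqref{e:SDEY} stand, and the drift in \eqref{e:SDEYStrat} should be $-\frac{\varkappa^2}{2\mu}\sum_{j<i}\mu_i\mu_j[V_i,V_j]$.

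Your route also packages the limit theorem differently from the paper, although the one-cycle computation is the same in substance.

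\emph{The paper's route.} It never forms a discrete-time chain. It proves compact containment and Aldous' criterion for the real-time step process, then computes the $p$-step pseudo-characteristic operator $B^{\e,\delta}_\iota f$ in Lemma~\ref{l:B}. It verifies an approximate martingale problem directly in real time, cutting $[s,t]$ at cycle-renewal epochs and bounding the boundary pieces with Lorden's excess inequality. Inside a cycle it expands step by step along each flow through $g_i=\langle\nabla f,V_i\rangle$. This separates the ordered cross terms $\langle\nabla g_i,V_j\rangle$, $j<i$, from the gliding term $\frac12\lambda_i^2\langle\nabla g_i,V_i\rangle$; the Hessian cross terms are collapsed afterwards via \eqref{e:H2}.

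\emph{Your route.} You expand the flow map to second order and Taylor-expand $f$ at the total cycle displacement, so the reduction to $\sigma_i^2$ follows at once from $\E[\tau_i\tau_j]=\mu_i\mu_j$ for $i\neq j$. You then apply a standard diffusion-approximation theorem to the embedded cycle chain. Finally you return to real time via the renewal law of large numbers and an intra-cycle oscillation bound.

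\emph{Trade-offs.} Your version makes the role of centering in tightness transparent, since the chain's conditional drift is $O(\e)$. It also handles the $(2+\rho)$-moment truncation neatly through $\min\{s^2,s^3\}\le s^{2+\rho}$. The paper's version works with the real-time process throughout and gives bounds uniform in $(y,\iota)$, at the cost of stopping-time and overshoot bookkeeping. For your time change you need joint convergence of the chain and $\e\mu$ times the cycle counter. This holds automatically, because the latter converges in probability to the identity.
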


The two limiting diffusions $X$ and $Y$ have the same second-order part, determined by the fluctuations of the run times,
but generally different drifts. The cross terms $DV_i[V_j]$, $j<i$, reflect the prescribed ordering of the vector
fields within one cycle.
In Stratonovich form, the order-dependent part of the drift is expressed through their Lie brackets.

We illustrate our findings by several examples and start with a generic example of run and tumble motion along two non-commutative
vector fields.

\begin{exa}[cyclic motion along two vector fields in $\bR^d$]
\label{exa:UV}
Let $(T_k)_{k\in\mathbb N_0}$ be the arrival times of a Poisson process with intensity $c\in (0,\infty)$.
Then, $ T_{k+1}-T_k\stackrel{\di}{=} \mathrm{Exp}(c)$, $\mu_i=1/c$, $\lambda_i^2=2/c^2$,
$\sigma_i^2=1/c^2$, $i=1,2$.
Let $V,U\colon\bR^d\to\bR^d$ be two vector fields. Then, any permutation of the vector fields
$\{U,V,-U,-V\}$ defines a four-step cyclic motion in $\bR^d$, $p=4$.
Since a cyclic shift corresponds only to a change of the initial direction
and does not affect the limiting diffusion, the $4!=24$ permutations reduce
to six inequivalent cyclic orderings:
\ba
(V^{(1)}_1,V^{(1)}_2,V^{(1)}_3,V^{(1)}_4)&=(U, -U, V, -V),\\
(V^{(2)}_1,V^{(2)}_2,V^{(2)}_3,V^{(2)}_4)&=(U, -U, -V, V),\\
(V^{(3)}_1,V^{(3)}_2,V^{(3)}_3,V^{(3)}_4)&=(U, V, -U, -V),\\
(V^{(4)}_1,V^{(4)}_2,V^{(4)}_3,V^{(4)}_4)&=(U, V, -V, -U),\\
(V^{(5)}_1,V^{(5)}_2,V^{(5)}_3,V^{(5)}_4)&=(U, -V, -U, V),\\
(V^{(6)}_1,V^{(6)}_2,V^{(6)}_3,V^{(6)}_4)&=(U, -V, V, -U).
\ea
These orders are schematically represented in Fig.\ \ref{f:two}.
In particular, the sequences $(3)$ and $(5)$ encode the counterclockwise and clockwise motions, respectively.
The limiting operators $A$ and $B$ have the following form:
\ba
A^{(1)}f&=A^{(2)}f=A^{(4)}f =A^{(6)}f\\
&= \frac{\varkappa^2}{4c} \Big( \langle \nabla^2 f  V, V\rangle +\langle \nabla^2 f  U, U\rangle
- \langle \nabla f , D U [ U] + D V [ V]\rangle\Big),\\
A^{(3)}f&=\frac{\varkappa^2}{4c} \Big(
\langle \nabla^2 f  V, V\rangle +\langle \nabla^2 f  U, U\rangle
- \langle \nabla f , D U [ U] + D V [ V] -[U,V] \rangle\Big),\\
A^{(5)}f&=\frac{\varkappa^2}{4c} \Big(
\langle \nabla^2 f  V, V\rangle +\langle \nabla^2 f  U, U\rangle
- \langle \nabla f , D U [ U] + D V [ V] +[U,V] \rangle\Big),\\
\ea
and
\ba
B^{(1)}f&=B^{(2)}f=B^{(4)}f =B^{(6)}f\\
&= \frac{\varkappa^2}{4c} \Big( \langle \nabla^2 f  V, V\rangle +\langle \nabla^2 f  U, U\rangle
+\langle \nabla f , D U [ U] + D V [ V]\rangle\Big),\\
B^{(3)}f&=\frac{\varkappa^2}{4c} \Big( \langle \nabla^2 f  V, V\rangle +\langle \nabla^2 f  U, U\rangle
+\langle \nabla f , D U [ U] + D V [ V] +[U,V]\rangle\Big),\\
B^{(5)}f
&= \frac{\varkappa^2}{4c} \Big( \langle \nabla^2 f  V, V\rangle +\langle \nabla^2 f  U, U\rangle
+ \langle \nabla f , D U [ U] + D V [ V]-[U,V]\rangle
\Big).
\ea
In particular, the orderings (3) and (5), corresponding to opposite orientations of the cycle,
produce Lie-bracket drift terms with opposite signs.
Thus, the orientation of the microscopic cyclic motion remains visible in the macroscopic diffusion limit.

\begin{figure}
\begin{center}
\includegraphics{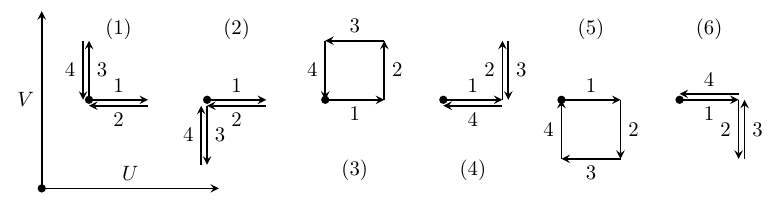}
\end{center}
\caption{Six inequivalent cyclic motions along vector fields $\pm U$ and $\pm V$.\label{f:two}}
\end{figure}
\end{exa}

\begin{exa}[homogeneous velocities]
Assume that all $V_1,\dots,V_p$ are constant vector fields. Then,
\ba
Af(x)=Bf(x)
=\frac{\varkappa^2}{2\mu}  \sum_{i} \sigma_{i}^2 \langle \nabla^2 f  V_i, V_i\rangle,
\ea
and the limit processes $X$ and $Y$ are a $d$-dimensional Brownian motion with the (possibly degenerate) covariance matrix
\ba
\Sigma &= \frac{\varkappa^2}{\mu}  \sum_{i} \sigma_{i}^2  V_i V_i^T,
\ea
that is,
\ba
X_t=Y_t&=x+\Sigma^{1/2} W_t,
\ea
where $W$ is a standard $d$-dimensional Brownian motion. Alternatively,
\ba
X_t=Y_t= x+\frac{\varkappa}{\sqrt\mu}  \sum_{i} \sigma_{i} V_i W^i_t,
\ea
where $W^1,\dots,W^p$ are independent standard one-dimensional Brownian motions.
\end{exa}

\begin{exa}[commuting vector fields]
Assume that the vector fields $V_1,\dots, V_p$ commute, i.e.,
\ba
{}[V_i,V_j]=DV_j[V_i]-DV_i[V_j]\equiv 0,\quad i,j=1,\dots, p,
\ea
so that the order-dependent Lie-bracket contribution in \eqref{e:opA} and \eqref{e:opB} vanishes.
Then, in view of the assumption $\mathbf{A}_{V,\mu}$
\ba
\sum_{j<i}\mu_i\mu_j DV_i[V_j]=-\frac12 \sum_{i}\mu_i^2 DV_i[V_i]
\ea
and
the operators $A$ and $B$ take the following simpler form:
\ba
Af
&= \frac{\varkappa^2}{2\mu}  \sum_{i} \sigma_{i}^2 \langle \nabla^2 f  V_i, V_i\rangle
- \frac{\varkappa^2}{2\mu} \sum_{i} \mu_i^2 \langle \nabla f , D V_i [ V_i] \rangle,\\
Bf
&=\frac{\varkappa^2}{2\mu} \sum_{i}  \sigma_{i}^2  \langle \nabla^2 f V_i ,V_i\rangle
+ \frac{\varkappa^2}{2\mu} \sum_{i}  \sigma_{i}^2 \langle \nabla f , D V_i [ V_i] \rangle
 ,\quad f\in C^3_b(\bR^d,\bR).
\ea
Consequently, the limiting diffusions satisfy the following SDEs:
\ba
\di X_t&=
\frac{\varkappa}{\sqrt\mu} \sum_{i}  \sigma_{i}  V_i (X_t)\, \di W^i_t
- \frac{\varkappa^2}{2\mu} \sum_{i} \mu_i^2 D V_i [ V_i](X_t)\,\di t\\
&= \frac{\varkappa}{\sqrt\mu} \sum_{i}  \sigma_{i}  V_i (X_t)\circ \di W^i_t
- \frac{\varkappa^2}{2\mu} \sum_{i} \lambda_i^2 D V_i [ V_i](X_t)\,\di t,\\
\di Y_t& = \frac{\varkappa}{\sqrt\mu} \sum_{i}  \sigma_{i}  V_i (Y_t)\, \di W^i_t
+\frac{\varkappa^2}{2\mu} \sum_{i} \sigma_i^2 D V_i [ V_i](Y_t)\,\di t\\
&= \frac{\varkappa}{\sqrt\mu} \sum_{i}  \sigma_{i}  V_i (Y_t)\circ \di W^i_t.
\ea
\end{exa}

\begin{exa}[run-and-reverse motion in $\bR^d$]
\label{exa:rr}
Assume that $p=2$, $V\colon \bR^d\to\bR^d$ and
\ba
V_1=V,\quad V_2=-V.
\ea
Let $(T_k)_{k\in\mathbb N_0}$ be the arrival times of a Poisson process with intensity $c\in (0,\infty)$ as in Example \ref{exa:UV}.
The vector fields $V_1$ and $V_2$ obviously commute, and the generators $A$ and $B$ take the form
\ba
Af
&= \frac{\varkappa^2}{2c}  \langle \nabla^2 f  V, V\rangle
- \frac{\varkappa^2}{2c}\langle \nabla f , D V [ V] \rangle,\\
Bf
&=\frac{\varkappa^2}{2c} \langle \nabla^2 f V ,V\rangle
+ \frac{\varkappa^2}{2c}  \langle \nabla f , D V [ V] \rangle\\
&=\frac{\varkappa^2}{2c}  \langle V , \nabla \langle V,\nabla f\rangle\rangle
 ,\quad f\in C^3_b(\bR^d,\bR).
\ea
The limiting dynamics is degenerate and is governed by a one-dimensional Brownian motion $W$.
In the gliding regime, the diffusion $Y$ occurs along the integral curves of the vector field $V$,
\ba
\di Y_t&= \frac{\varkappa}{\sqrt c} V (Y_t)\circ \di W_t.
\ea
In the flight regime,
\ba
\di X_t&=
\frac{\varkappa}{\sqrt c}  V (X_t)\circ \di W_t - \frac{\varkappa^2}{c} D V [ V](X_t)\,\di t.
\ea
Thus, the diffusion component of $X$ is tangent to the one-dimensional foliation generated by $V$,
whereas the additional drift acts in the direction $-DV[V]$.

In dimension $d=1$, the limiting generator
$Bf=\frac{\varkappa^2}{2c} V(Vf')'$ was derived in \cite[Theorem 5.1]{ratanov1999telegraph}.
\end{exa}

\begin{exa}[planar circular run-and-reverse motion]
In the setting of Example \ref{exa:rr},
assume that $d=2$, $x=(x^1,x^2)$, $v\in (0,\infty)$, $\theta\in [0,\pi)$, and
\ba
V(x)=V(x;\theta)=\frac{v}{\|x\|}\begin{pmatrix}
                                  x^1\sin\theta-x^2 \cos \theta\\
                                  x^1 \cos \theta + x^2\sin\theta
                                 \end{pmatrix}
= \frac{v}{\|x\|}\begin{pmatrix}
                                  \sin\theta &-\cos \theta\\
                                  \cos \theta  &\sin\theta
                                 \end{pmatrix}         x
                                 ,\quad
V_1=V,\quad V_2=-V.
\ea
Since the vector field $V$ is singular at the origin, throughout this example we consider
the dynamics away from $x=0$, where $V$ is smooth and bounded.

The deterministic dynamics along the vector field $V$ looks as follows.
For $\theta=0$ this is a purely tangential (circular) counterclockwise motion,
for $\theta\in (0,\pi/2)$ the trajectories are outward spirals, for  $\theta=\pi/2$, the motion is radially outward,
for $\theta\in (\pi/2,\pi)$ the trajectories are inward spirals. The angles $\theta\in [\pi,2\pi)$ correspond to the
motion in an opposite direction, $V(x;\theta+\pi)=-V(x;\theta)$.

The directional derivative $DV[V]$ has the form
\ba
DV[V](x)=\frac{v^2}{\|x\|^2}\begin{pmatrix}
                                  -x^1\cos^2\theta -x^2 \sin \theta\cos \theta\\
                                  - x^2 \cos^2 \theta + x^1 \sin \theta \cos \theta
                                 \end{pmatrix}
=\frac{v^2\cos\theta}{\|x\|^2}\begin{pmatrix}
                                  -\cos\theta & -\sin \theta \\
                                  \sin \theta &- \cos \theta
                                 \end{pmatrix}x.
\ea
It is easy to see that $V$ and $DV[V]$ are orthogonal, so that whereas the gliding dynamics gives rise to
a one-dimensional diffusion along an integral curve of $V$, the flight dynamics $X$ acquires an additional radial drift
with the absolute value
\ba
\frac{\varkappa^2}{c} \|DV[V](x)\|= \frac{\varkappa^2}{c} \frac{v^2}{\|x\|}|\cos\theta|.
\ea
Sample trajectories of the diffusion approximations of these motions for different angles $\theta$ are presented in Fig.~\ref{f:planar}.

\begin{figure}
\includegraphics{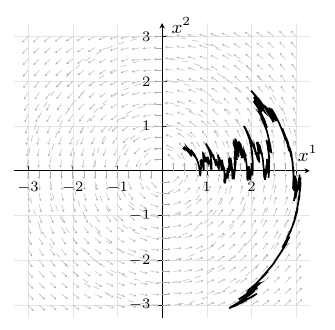}
\includegraphics{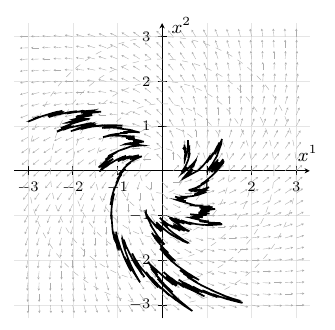}
\includegraphics{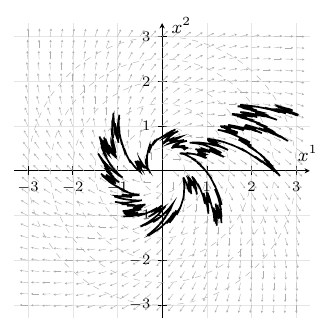}
\caption{Sample paths of the planar limiting diffusion $X$ (flights regime) for $c=1$, $v=1$, $x=(0.5,0.5)$ and
$\theta=0$ (l.), $\theta=\pi/4$ (c.), $\theta=3\pi/4$ (r.). The arrows indicate the direction of the vector field $V$.\label{f:planar}}
\end{figure}
\end{exa}

The following sections \ref{s:Awrc}--\ref{s:Aconv} will be devoted to
the proof of Theorem \ref{t:A}. Theorem \ref{t:B} will be proved in
section \ref{s:gliding}.
In all the arguments of these sections we will assume that assumptions
$\mathbf{A}_V$,
$\mathbf{A}_\tau$,
$\mathbf{A}_{V,\mu}$ and
$\mathbf{A}_\varkappa$ hold, without repeating it.
The proof will be organised as follows.
First we note that it is more convenient to work with the c\`adl\`ag version of the process $X^{\e,\delta}$,
namely, with the process $Z^{\e,\delta}$ defined as
\ba
Z^{\e,\delta}_t=X^{\e,\delta}_{T_k^\e}\quad \text{ on } t\in [T_k^\e,T^\e_{k+1}),\quad k\in\mathbb N_0.
\ea
It is clear that the values of these processes at arrival times coincide:
\ba
Z^{\e,\delta}_{T^\e_k}=X^{\e,\delta}_{T_k^\e},\quad k\in \mathbb N_0.
\ea
In section \ref{s:Awrc} we establish the weak relative compactness of the family $\{Z^{\e,\delta}\}$ in the Skorokhod space
$D([0,\infty),\bR^d)$ and show that the difference  $Z^{\e,\delta}- X^{\e,\delta}$ converges to zero in u.c.p.
Then, in section \ref{s:operator} we determine the limit of the $p$-step
pseudo-characteristic operator of $Z^{\e,\delta}$ (or, equivalently, $X^{\e,\delta}$). In section \ref{s:Aconv}
we show that any condensation point of the family $Z^{\e,\delta}$ is a solution
to a (well-posed) martingale problem associated with the operator $A$.
Finally, in section \ref{s:gliding} we discuss the necessary adjustments of the arguments that lead to the proof of
Theorem \ref{t:B}.

\section{Weak relative compactness of $\{Z^{\e,\delta}\}$\label{s:Awrc}}

The weak relative compactness of the family $\{Z^{\e,\delta}\}$ will follow from the
compact containment condition (Lemma \ref{l:cc}) and Aldous' criterion (Lemma \ref{l:aldous}).

For the proof, we will need the \emph{first passage process}
$N^\e=(N^\e_t)_{t\in[0,\infty)}$ related to the sequence of
arrival times $(T^\e_k)_{k\in\bN_0}$ defined as
\ba
N^\e_t&=\min\{k\in\mathbb N\colon T_k^\e> t\}.
\ea
The random variable $N^\e_t$ denotes the index of the first arrival after time $t$.
The important property of the process $N^\e$ is that for each $t\in[0,\infty)$
the random variable $N_t^\e$ is a stopping time with respect to the filtration
$\rF^\e_k:=\sigma(T_0^\e,\dots, T_k^\e)$, $k\in\mathbb N_0$,
since  for each $k\in\mathbb N$
\ba
\{N_t^\e \leq k\} & =\{T^\e_k > t \} \in \rF^\e_k,\quad k\in\mathbb N_0.
\ea
Moreover, we have useful equalities
\ba
\label{e:tnu}
\{N_t^\e=k\}& =\{T^\e_{k-1}\leq t< T^\e_k \}\in \rF^\e_k,\quad k\in\mathbb N,\\
\{N_t^\e\geq k\} & =\{T^\e_{k-1} \leq t \},\quad k\in\mathbb N.
\ea
In particular, the process $N^\e$ will be used to estimate the number of arrivals on a finite interval $[0,T]$ as a function of $\e$.
First, we prove the following lemma.

\begin{lem}[estimate on $N^\e$]
\label{l:Etnu}
There is $K\in (0,\infty)$ such that for any $t\in[0,\infty)$, any $\iota=1,\dots, p$, and any $\e\in(0,1]$
\ba
\E_\iota N^\e_t\leq \frac{t+K\e}{\mu_* \e},
\ea
where $\mu_*=\min\{\mu_1,\dots,\mu_p\}$.
\end{lem}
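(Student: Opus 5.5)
We apply Wald's identity to the stopping time $N^\e_t$. Write $T^\e_k=\e\sum_{m=0}^{k-1}\tau^{(m)}$, where the $\tau^{(m)}\stackrel{\di}{=}\tau_{n_m}$ are independent and their laws cycle through $\tau_1,\dots,\tau_p$. Since these increments are not identically distributed, the standard Wald identity does not apply directly. We use its martingale form instead: set
\ben
M_k:=T^\e_k-\e\sum_{m=0}^{k-1}\mu_{n_m}.
\een
This is an $\rF^\e_k$-martingale with $M_0=0$, because $\E\big[T^\e_{k+1}-T^\e_k\,\big|\,\rF^\e_k\big]=\e\mu_{n_k}$ by independence of the increments.

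First we check that $N^\e_t$ is a stopping time with respect to $(\rF^\e_k)$; this was noted above. We also need $\E_\iota N^\e_t<\infty$ a priori. For this, compare with blocks of $p$ consecutive steps. Each full cycle has duration $\e(\tau_1+\dots+\tau_p)$ in law, and this has positive mean. By the standard renewal argument, the number of cycles completed before time $t$ has finite expectation (for instance, $\P(\text{cycle}>\eta)\ge q>0$ for some $\eta>0$ gives a geometric bound). Alternatively, we can avoid this step entirely by working with the bounded stopping time $N^\e_t\wedge n$ and letting $n\to\infty$ via monotone convergence at the end. We choose the latter.

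Optional stopping at the bounded stopping time $N^\e_t\wedge n$ gives
\ben
\E_\iota T^\e_{N^\e_t\wedge n}=\e\,\E_\iota\sum_{m=0}^{N^\e_t\wedge n-1}\mu_{n_m}\ge \e\mu_*\,\E_\iota(N^\e_t\wedge n).
\een
For the left-hand side, we need an upper bound on $T^\e_{N^\e_t\wedge n}$. If $N^\e_t\le n$, then $T^\e_{N^\e_t}=T^\e_{N^\e_t-1}+\e\tau^{(N^\e_t-1)}\le t+\e\tau^{(N^\e_t-1)}$, using $T^\e_{N^\e_t-1}\le t$ from \eqref{e:tnu}. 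If $N^\e_t>n$, then $T^\e_n\le t$. This overshoot is the main obstacle: the last increment is not independent of the event that it straddles $t$ (inspection paradox). So we do not try to bound $\E\tau^{(N^\e_t-1)}$ by a mean. Instead we bound it crudely by the sum of all increments up to the stopping time:
\ben
\E\,\e\tau^{(N-1)}\le \e\,\E\sum_{m<N\wedge n}\tau^{(m)}.
\een
That crude bound is circular, however, so the better route is to bound the overshoot by a maximum of $p$-periodic quantities. We use $\tau^{(N-1)}\le \sum_{m=0}^{N\wedge n-1}\tau^{(m)}\mathbf 1\{T^\e_m\le t\}$? This is also circular.

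The cleaner option is a second-moment bound. We have $\E(\tau^{(N-1)})\le \E\big(\sum_{m<N\wedge n}(\tau^{(m)})^2\big)^{1/2}$. Then Cauchy–Schwarz and Wald give $\le(\lambda^{*2}\,\E(N\wedge n))^{1/2}$, where $\lambda^*=\max_i\lambda_i$. This yields the quadratic inequality
\ben
\e\mu_*\,a\le t+\e\lambda^*\sqrt a,\qquad a=\E_\iota(N^\e_t\wedge n).
\een
From it we get $\e\mu_* a\le t+\e\lambda^{*2}/(2\mu_*)+\e\mu_*a/2$ by Young's inequality. This gives a bound of the form $(t+K\e)/(\mu_*\e)$ up to a factor of $2$ in front of $t$, which is insufficient.

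So we must improve the overshoot bound without losing that factor. The key idea is to condition on $\rF^\e_{m}$. Write
\ben
\E\,\tau^{(N-1)}=\sum_{m}\E\big[\tau^{(m)}\mathbf 1\{T^\e_m\le t<T^\e_{m+1}\}\big]\le\sum_m\E\big[\tau^{(m)}\mathbf 1\{\e\tau^{(m)}>t-T^\e_m\ge0\}\big].
\een
Then, for each fixed $m$, we use independence of $\tau^{(m)}$ from $T^\e_m$ and sum over $m$ using a renewal-density bound. The constant $K$ then comes from $\lambda_i^2/\mu_*$, as in Lorden's inequality for the expected overshoot, extended here to the $p$-periodic case. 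Our plan is to prove a Lorden-type bound, $\E\,\e\tau^{(N-1)}\le \e K$ with $K$ depending on $\max_i\lambda_i^2/\min_i\mu_i$, by this decomposition. Combining it with the optional stopping identity and letting $n\to\infty$ gives $\E_\iota N^\e_t\le (t+K\e)/(\mu_*\e)$.
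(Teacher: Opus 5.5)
Your skeleton is exactly the paper's: the compensated martingale $M_k$, optional stopping at the bounded time $N^\e_t\wedge n$, the lower bound $\e\mu_*\E_\iota(N^\e_t\wedge n)$, and monotone convergence. The only nontrivial ingredient, though, is a bound $\sup_{t}\E_\iota(T^\e_{N^\e_t}-t)\le K\e$ (or your variant for the straddling increment $\e\tau^{(N-1)}$), and you do not prove it.

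Your Cauchy--Schwarz route cannot give it. Solving $\e\mu_* a\le t+\e\lambda^*\sqrt a$ exactly leaves a term of order $\sqrt{t/\e}$, so no $t$-independent $K$ comes out. (The Young-inequality bound with $2t$ in place of $t$ is correct, and would suffice for every later use of the lemma in the paper, but it is not the stated estimate.)

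The final step is only announced as a plan, and as sketched it is incomplete. A renewal density need not exist, since the $\tau_i$ may be lattice or even deterministic. What the decomposition $\sum_m\E[\tau^{(m)}\bI\{0\le t-T^\e_m<\e\tau^{(m)}\}]$ actually requires is a uniform bound $\sup_{a\ge 0}\sum_m\P_\iota(T^\e_m\in(a,a+\e])\le C$ on the expected number of arrivals in a window of length $\e$. That is a local version of the very quantity the lemma controls. Such a bound can be proved: restart at the first arrival after $a$ and use scaling to get $C=\max_{\iota'}\E_{\iota'}N^1_1$. But this first needs $\E_{\iota'}N^1_1<\infty$, via the geometric argument you explicitly chose to skip. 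The constant it then produces is of the form $C\sum_i(\lambda_i^2+\mu_i)$, not a Lorden-type $\max_i\lambda_i^2/\mu_*$ as you claim.

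The paper closes this step by quoting Spouge's overshoot inequality for independent, non-identically distributed summands, each stochastically dominated by $\e\widehat\tau$ with $\widehat\tau=\tau_1+\cdots+\tau_p$. Within your framework there is a more elementary fix that needs no non-i.i.d.\ renewal theory: pass to complete cycles. Set $\widehat N^\e_t=\min\{k\colon T^\e_{kp}>t\}$. Then $N^\e_t\le p\widehat N^\e_t$, hence $T^\e_{N^\e_t}\le T^\e_{p\widehat N^\e_t}$. Moreover, $(T^\e_{kp})_{k\in\bN_0}$ is an ordinary renewal process whose i.i.d.\ increments are distributed as $\e\widehat\tau$, for every starting direction $\iota$. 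Lorden's inequality (the paper itself uses it in Lemma~\ref{l:mp}) gives $\E_\iota(T^\e_{p\widehat N^\e_t}-t)\le \e\,\E\widehat\tau^2/\mu$ uniformly in $t$. Therefore $\E_\iota T^\e_{N^\e_t\wedge n}\le t+\e\,\E\widehat\tau^2/\mu$, and your optional-stopping inequality yields the lemma with $K=\E\widehat\tau^2/\mu$.
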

\begin{proof}
First we note that
\ba
T^\e_{N^\e_t} =t + ( T^\e_{N^\e_t}- t)
=t + R^\e_t,
\ea
where the value
\ba
R^\e_t=T^\e_{N_t^\e}-t
\ea
is the overshoot (or the residual lifetime) of the level $t$.

To estimate $\E_\iota R^\e_t$, we use Theorem 1.1 from \cite{spouge2007inequalities}, and,
in particular, the estimate given in Eq.\ (5) there.
Note that each random variable $\tau^\e_i=\e\tau_i$, $i=1,\dots, p$, is
stochastically dominated by the random variable $\e\widehat \tau$, where
$\widehat \tau=\tau_1+\cdots+\tau_{p}$.
Then, Eq.\ (5) in \cite{spouge2007inequalities} yields that
\ba
\label{e:R}
\sup_{t\in[0,\infty)}\E_\iota R^\e_t &\leq \frac12 \inf_{a\in(0,\infty)}\frac{\E (\widehat \tau^\e + a)^2}{a \P(\widehat \tau^\e\geq a) }
&=\frac12 \inf_{a\in(0,\infty)}\frac{\e^2 \E (\widehat \tau + a)^2}{\e a \P(\widehat \tau \geq a) }
=\e K
\ea
for some $K\in(0,\infty)$.
Therefore,
\ba
\E_\iota T^\e_{N^\e_t} &\leq t + \e K.
\ea
Furthermore, for $k\in\mathbb N_0$ we write
\ba
T_k^\e &= \sum_{j=1}^k (T_j^\e - T_{j-1}^\e) - \sum_{j=1}^k \E_\iota (T_j^\e - T_{j-1}^\e) + \sum_{j=1}^k \E_\iota (T_j^\e - T_{j-1}^\e) \\
&= M_k^\e + \sum_{j=1}^k \E_\iota (T_j^\e - T_{j-1}^\e) \\
&\geq  M_k^\e + k \e \mu_*,
\ea
where $M^\e$ is a martingale w.r.t.\ the filtration $(\rF_k^\e)_{k\in\mathbb N_0}$
and $\mu_*=\min\{\mu_1,\dots,\mu_p\}\in (0,\infty)$.

Hence, for any $n\in\bN$
\ba
T_{N_t^\e}^\e
&\geq T_{N_t^\e\wedge n}^\e \geq  M_{N_t^\e\wedge n}^\e +\e\mu_* (N_t^\e\wedge n).
\ea
Since $N_t^\e\wedge n$ is a bounded stopping time, the optional stopping theorem gives
$\E_\iota M_{N_t^\e\wedge n}^\e=0$
and therefore,
\ba
\E_\iota (N_t^\e\wedge n) & \leq \frac{t  +\e K}{\e \mu_*}.
\ea
Applying the monotone convergence theorem finishes the proof.
\end{proof}

Now we are able to prove the compact containment condition (see, e.g., \cite[Chapter 3, Theorem 9.1]{EthierK-86}).

\begin{lem}[compact containment]
\label{l:cc}
For any $T\in[0,\infty)$ and $\eta\in (0,1]$, there is
$R=R_{T,\eta}\in(0,\infty)$ such that
\ba
\liminf_{\e,\delta\downarrow 0} \inf_{x\in\bR^d,\, \iota\in\{1,\dots,p\}}
\P_{x,\iota}\Big(\sup_{t\in[0,T]}|Z^{\e,\delta}_t-x|\leq  R\Big) \geq  1 -\eta.
\ea
\end{lem}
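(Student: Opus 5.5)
We prove the compact containment bound by controlling $\sup_{t\le T}|Z^{\e,\delta}_t-x|$ through the embedded chain $X^{\e,\delta}_{T^\e_k}$, with the number of steps bounded via Lemma~\ref{l:Etnu}. On $[0,T]$ we have $\sup_{t\le T}|Z_t^{\e,\delta}-x|\le \max_{k\le N^\e_T}|X^{\e,\delta}_{T^\e_k}-x|$. The strategy is to group steps into complete cycles of length $p$ and write the position after $m$ cycles as $x$ plus a martingale plus a compensator. Both pieces are then estimated on the random (but controllable) number of cycles $\lceil N^\e_T/p\rceil+1$.

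\textbf{Cycle decomposition.} Write $\Delta_k=X^{\e,\delta}_{T^\e_{k+1}}-X^{\e,\delta}_{T^\e_k}=\frac{\e}{\delta}V_{n_k}(X^{\e,\delta}_{T^\e_k})\tau^{(k)}$ with $\tau^{(k)}\stackrel{\di}{=}\tau_{n_k}$ independent of the past.
\begin{itemize}
\item Centred pieces: $\frac{\e}{\delta}V_{n_k}(X_{T_k})(\tau^{(k)}-\mu_{n_k})$ are martingale differences with conditional second moment $\le_C \e^2/\delta^2=\mathcal O(\e)$.
\item Mean pieces: $\frac{\e}{\delta}\mu_{n_k}V_{n_k}(X_{T_k})$. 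Over one full cycle starting at step $k_0$, we replace $X_{T_k}$ by $X_{T_{k_0}}$ in each term. By $\mathbf A_{V,\mu}$ the frozen terms sum to zero. The error is bounded by $\frac{\e}{\delta}\|\nabla V\|\sum_{k}|X_{T_k}-X_{T_{k_0}}|\le_C \frac{\e^2}{\delta^2}\sum_{k\text{ in cycle}}\tau^{(k)}$, i.e.\ of order $\e\cdot(\text{sum of the $p$ run times})$.
\end{itemize}
The incomplete first and last cycles contribute at most $\frac{\e}{\delta}\|V\|\sum\tau^{(k)}$ over at most $2p$ steps. A maximal inequality over the bounded range handles this, and it vanishes in probability since $\e/\delta\to0$.

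\textbf{Estimating up to the random horizon.} Set $M$ for the martingale part stopped at the stopping time $N^\e_T$ (cf.\ \eqref{e:tnu}). By Doob's inequality and optional stopping,
\[
\E\sup_{k\le N^\e_T}|M_k|^2\le_C \frac{\e^2}{\delta^2}\,\E N^\e_T\le_C \frac{\e}{\delta^2}(T+K\e)\le_C T+1,
\]
using Lemma~\ref{l:Etnu} and $\mathbf A_\varkappa$. The drift error is bounded by $\e\sum_{k\le N^\e_T+p}\tau^{(k)}$. By Wald's identity its expectation is $\le_C \e\,\E N^\e_T+\e\le_C T+1$. (Alternatively, use $\e\sum_{k< N^\e_T}\tau^{(k)}=T^\e_{N^\e_T-1}\le T$ directly, plus one extra overshoot term controlled by \eqref{e:R}.) Since every constant is uniform in $x$ and $\iota$ (only $\|V\|$ and $\|\nabla V\|$ enter), Chebyshev and Markov give
\[
\P_{x,\iota}\Big(\sup_{t\le T}|Z^{\e,\delta}_t-x|>R\Big)\le_C \frac{T+1}{R^2}+\frac{T+1}{R}+o(1).
\]
Choosing $R$ large then yields the claim.

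\textbf{Main obstacle.} The delicate step is the cycle-freezing argument up to a random time. The cycles must be aligned with $\iota$, the starting index. The bound for the last, possibly incomplete cycle must avoid a maximum of infinitely many run times. I would handle this with the crude bound $\max_{k\le N}\tau^{(k)}\le\big(\sum_{k\le N}\tau^{(k)}\big)$ on the partial-cycle terms, each multiplied by $\e/\delta$. This gives expectation $\le_C(\e/\delta)\,\e\,\E N^\e_T=\mathcal O(\sqrt\e)$. The stopping-time measurability of $N^\e_T$ with respect to $\rF^\e_k$ justifies applying Wald and optional stopping.
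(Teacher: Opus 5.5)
Your overall route is the paper's. You split $Z^{\e,\delta}_{T^\e_k}-x$ into a martingale and a compensator and control the martingale with Doob's inequality and Lemma~\ref{l:Etnu}. You control the compensator by grouping $p$ consecutive steps, freezing the argument and using $\mathbf{A}_{V,\mu}$, which leaves a pathwise error $\le_C \frac{\e}{\delta^2}T^\e_{mp}$ handled by the overshoot bound \eqref{e:R}. Applying Doob to the martingale stopped at $N^\e_T$ is legitimate, since $\{k\le N^\e_T\}=\{T^\e_{k-1}\le T\}\in\rF^\e_{k-1}$. It is a slightly cleaner variant of the paper's truncation at $NT/\e$ via Markov's inequality.

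There is, however, a genuine error in the step you call the main obstacle. With your normalization $\Delta_k=\frac{\e}{\delta}V_{n_k}(X^{\e,\delta}_{T^\e_k})\tau^{(k)}$, the crude bound $\max\le\sum$ on the partial-cycle terms gives
\[
\frac{\e}{\delta}\,\E\sum_{k\le N^\e_T}\tau^{(k)}\le_C \frac{\e}{\delta}\,\E N^\e_T\le_C \frac{T+K\e}{\mu_*\delta}.
\]
This is genuinely of order $T/\delta$, because $\e$ times the sum of the run times up to $N^\e_T$ is at least $T$. The extra factor $\e$ in your expression $(\e/\delta)\,\e\,\E N^\e_T$ is spurious: the time-scaling $\e$ is already contained in $\e/\delta$. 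Replacing a maximum of roughly $T/\e$ run times by their sum loses exactly the factor $1/\e$ you cannot afford, so Markov's inequality then yields nothing.

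The obstacle itself disappears once you notice that your Doob bound already controls $\max_{k\le N^\e_T}|M_k|$ at every step, not only at cycle boundaries. The incomplete last cycle therefore enters only through the compensator. Its increments $\frac{\e}{\delta}\mu_{n_k}V_{n_k}(X^{\e,\delta}_{T^\e_k})$ are deterministically bounded by $\frac{\e}{\delta}\mu\|V\|$. This gives $|A_{mp+j}|\le |A_{mp}|+\mu\|V\|\frac{\e}{\delta}$, exactly as in the paper, with no run times involved.

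If you prefer to bound raw displacements by run times, you must control $\max_{k\le N^\e_T}\e\tau^{(k)}$ by a union bound, not by the sum. That union bound uses the independence of $T^\e_k-T^\e_{k-1}$ and $\{k\le N^\e_T\}$, as in the proof of Lemma~\ref{l:ucp}.

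Similarly, there is no incomplete first cycle to treat. Any $p$ consecutive steps visit each $V_i$ exactly once, so the blocks can start at step $0$ for every $\iota$.
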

\begin{proof}
Let $T\in(0,\infty)$.
Since $Z^{\e,\delta}$ is constant on each interval between successive
arrival times, it is sufficient to consider its values at the arrival times and
to show that
\ba
\liminf_{\e,\delta\downarrow 0}
 \inf_{x\in\bR^d,\, \iota\in\{1,\dots,p\}}
\P_{x,\iota}\Big(\sup_{k\leq N^\e_T }|Z^{\e,\delta}_{T_k^\e}-x|\leq  R\Big) \geq  1 -\eta.
\ea
Consider the martingale difference sequence
\ba
\Delta_0^{\e,\delta}&:=0,\\
\Delta_{k}^{\e,\delta}& = Z^{\e,\delta}_{T_{k}^\e}- Z^{\e,\delta}_{T_{k-1}^\e}
- \E \Big[ Z^{\e,\delta}_{T^\e_{k}}- Z^{\e,\delta}_{T^\e_{k-1}}\Big|\rF^{\e,\delta}_{T^\e_{k-1}}  \Big]\\
&=\frac{1}{\delta}V_{n_{k-1}}(Z^{\e,\delta}_{T^\e_{k-1}})  \Big[ T^\e_{k}- T^\e_{k-1}- \E_\iota  (T^\e_{k}-T^\e_{k-1})\Big],
\quad k\in\mathbb N,
\ea
and the associated martingale
\ba
M_k^{\e,\delta}:=\sum_{j=1}^k \Delta^{\e,\delta}_{j}, \quad k\in\mathbb N, \quad M_0^{\e,\delta}=0.
\ea
Then,
\ba
Z^{\e,\delta}_{T^\e_k}-x
&=M_k^{\e,\delta} + A_k^{\e,\delta},
\ea
where
\ba
A_k^{\e,\delta}:=\sum_{j=1}^k \E \Big[ Z^{\e,\delta}_{T^\e_j}- Z^{\e,\delta}_{T^\e_{j-1}}\Big|\rF^{\e,\delta}_{T^\e_{j-1}}  \Big],
\quad k\in\mathbb N,
\quad A_0^{\e,\delta}=0.
\ea
Consequently,
\ba
\label{e:e1}
\P_{x,\iota}\Big(\max_{t\in[0,T]}|Z^{\e,\delta}_t-x|> R\Big)
&\leq \P_{x,\iota}\Big(\max_{k\leq N^\e_T }|M_k^{\e,\delta}|+\max_{k\leq N^\e_T }|A_k^{\e,\delta}|> R\Big)\\
&\leq \P_{x,\iota}\Big(\max_{k\leq N^\e_T }|M_k^{\e,\delta}|>\frac{R}{2}\Big)
+\P_{x,\iota}\Big(\max_{k\leq N^\e_T }|A_k^{\e,\delta}|> \frac{R}{2}\Big).
\ea
By Lemma \ref{l:Etnu} and Markov's inequality there is $N\in (0,\infty)$ large enough and $K\in (0,\infty)$ such that
\ba
\label{e:NT}
\P_\iota\Big(N_T^\e> \frac{N T}{\e} \Big)
\leq \frac{\e}{NT} \E_\iota N_T^\e
\leq \frac{\e}{NT}\frac{T+K\e}{\mu_*\e}
<\frac{\eta}{3}
\ea
for all $\e\in(0,1]$.
Hence,
\ba
\P_{x,\iota}\Big(\max_{k\leq N^\e_T }|M_k^{\e,\delta}|>\frac{R}{2}\Big)
\leq \P_{x,\iota}\Big(\max_{k\leq NT/\e}|M_k^{\e,\delta}|>\frac{R}{2}\Big)
+ \frac{\eta}{3}.
\ea
With the help of Doob's maximal inequality we get
\ba
\label{e:e2}
 \P_{x,\iota}\Big( \max_{k\leq  NT/\e} |M^{\e,\delta}_k|> \frac{R}{2}\Big)
&\leq \frac{4}{R^2}\E_{x,\iota} |M^{\e,\delta}_{\lfloor NT/\e\rfloor}|^2\\
&\leq  \frac{4}{R^2}\sum_{k=1}^{\lfloor NT/\e\rfloor}\E_{x,\iota} |\Delta^{\e,\delta}_k|^2\\
&\leq \frac{4}{R^2} \frac{\|V\|^2}{\delta^2}\frac{NT}{\e}\e^2 \max_{1\leq i\leq p} \Var\tau_i
<\frac{\eta}{3}
\ea
for $\e,\delta$ small and $R$ large enough.

We treat now the terms $A_k^{\e,\delta}$.
First, we note that a straightforward estimate of the absolute value of $A^{\e,\delta}$ does not work.
Indeed, it is easy to see that for
each $k\in\mathbb N_0$,
$|A^{\e,\delta}_k|\leq k\frac{\e}{\delta}\|V\|\max\{\mu_1,\dots,\mu\} $. Since the number of arrivals on the interval $[0,T]$
must be of order $\frac{1}{\e}$, a simple estimate yields that $|A^{\e,\delta}_{N^\e_T}|$ is of order $\frac{1}{\delta}$
which is not helpful.

To obtain a useful estimate, we group the increments into complete cycles of length $p$ and use the centering assumption
\eqref{e:Vmu}. The leading contribution of each complete cycle then vanishes, leaving only an error caused by
the spatial variation of the vector fields.

First, we consider the term $A^{\e,\delta}$ at time instants with indices $k$ which are multiples of $p$, i.e., assume first that
$k=mp$ for some $m\in\mathbb N$.
Then,
\ba
A_{mp}^{\e,\delta}
=\sum_{j=1}^{mp}
\E \Big[ Z^{\e,\delta}_{T^\e_j}- Z^{\e,\delta}_{T^\e_{j-1}}\Big|\rF^{\e,\delta}_{T^\e_{j-1}}  \Big]
&=\frac{\e }{\delta}\sum_{i=1}^{m} \sum_{j=1}^{p}
\mu_{n_{j-1}}V_{n_{j-1}}\Big(Z^{\e,\delta}_{T^\e_{(i-1)p+j-1}} \Big).
\ea
By assumption $\mathbf{A}_{V,\mu}$, we rewrite
\ba
\sum_{j=1}^{p}
\mu_{n_{j-1}}V_{n_{j-1}}(Z^{\e,\delta}_{T^\e_{(i-1)p+j-1}} )
&=
\sum_{j=1}^{p} \mu_{n_{j-1}}V_{n_{j-1}} (Z^{\e,\delta}_{T^\e_{(i-1)p}} )\quad (=0)\\
&+ \sum_{j=2}^{p}
\mu_{n_{j-1}} \Big(V_{n_{j-1}}(Z^{\e,\delta}_{T^\e_{(i-1)p+j-1}} ) - V_{n_{j-1}}(  Z^{\e,\delta}_{T^\e_{(i-1)p}})\Big).
\ea
Therefore,
\ba
\Big|\sum_{j=1}^{p}
\mu_{n_{j-1}} V_{n_{j-1}}(Z^{\e,\delta}_{T^\e_{(i-1)p+j-1}} )\Big|
&\leq\sum_{j=2}^{p} \mu_{n_{j-1}} \|\nabla V_{n_{j-1}}\|
\Big|Z^{\e,\delta}_{T^\e_{(i-1)p+j-1}} - Z^{\e,\delta}_{T^\e_{(i-1)p}}\Big| .
\ea
Furthermore, for each $j=2,\dots, p$,
\ba
\Big|Z^{\e,\delta}_{T^\e_{(i-1)p+j-1}} - Z^{\e,\delta}_{T^\e_{(i-1)p}}\Big|
&\leq  \| V\|
\frac{1}{\delta} \Big(T^\e_{(i-1)p+j-1} - T^\e_{(i-1)p}\Big)
\leq_C  \frac{1}{\delta} (T^\e_{ip} - T^\e_{(i-1)p}).
\ea
Finally, this gives
\ba
\label{e:Amp}
|A_{mp}^{\e,\delta}|&=\Big|\sum_{j=1}^{mp}
\E \Big[ Z^{\e,\delta}_{T^\e_j}- Z^{\e,\delta}_{T^\e_{j-1}}\Big|\rF^{\e,\delta}_{T^\e_{j-1}}  \Big]\Big|\\
&\leq_C \frac{\e}{\delta^2} \sum_{i=1}^{m} (T^\e_{ip} - T^\e_{(i-1)p})\\
&\leq_C  T_{mp}^\e.
\ea
For intermediate
indices $k=mp + j$, $j=1,\dots,p-1$ we combine \eqref{e:Amp} with the straightforward estimate:
\ba
|A_{mp+j}^{\e,\delta}|
&\leq |A_{mp}^{\e,\delta}|
+\sum_{l=mp+1}^{mp+j} \Big|\E \Big[ Z^{\e,\delta}_{T^\e_l}- Z^{\e,\delta}_{T^\e_{l-1}}\Big|\rF^{\e,\delta}_{T^\e_{l-1}}  \Big]\Big|\\
&\leq
|A_{mp}^{\e,\delta}|
+ \sum_{l=1}^{j}\Big|
\frac{\e \mu_{n_{l-1}}}{\delta}V_{n_{l-1}}(Z^{\e,\delta}_{T^\e_{mp+l-1}} )\Big|\\
&\leq |A_{mp}^{\e,\delta}| +  \mu\|V\| \frac{\e}{\delta}\\
&\leq_C T_{mp}^\e + \frac{\e}{\delta}\\
&\leq_C T_{k}^\e +\sqrt{\e}.
\ea
Overall, there is $C\in (0,\infty)$ such that for each $k\in\mathbb N$
\ba
\label{e:e3}
|A_{k}^{\e,\delta}|\leq C( T_k^\e + \sqrt{\e})
\ea
and
\ba
\label{e:A}
\P_{x,\iota}\Big(\max_{k\leq N^\e_T }|A_k^{\e,\delta}|> \frac{R}{2}\Big)
&\leq \P_\iota\Big(T_{N_T^\e} +\sqrt{\e}> \frac{R}{2C}\Big)\\
&=\P_\iota\Big(T_{N_T^\e}-T + T +\sqrt{\e}> \frac{R}{2C}\Big)\\
&\leq \P_\iota\Big(R^\e_T > \frac{R}{4C}\Big) + \P\Big(T + \sqrt{\e}> \frac{R}{4C}\Big)\\
&\leq_C \frac{\e}{R}+0\leq \frac{\eta}{3}
\ea
for $R$ large, where for the estimate of the overshoot $R^\e_T$ we used \eqref{e:R}.
\end{proof}

To prove the Aldous' criterion (see, e.g.,  \cite[\S 4a, Chapter VI]{JacodS-03}) we denote by $\mathbb F^{\e,\delta}$
the natural filtration of the process $Z^{\e,\delta}$.

\begin{lem}[Aldous' criterion]
\label{l:aldous}
For any $\eta\in(0,1]$, any $T\in[0,\infty)$
\ba
\lim_{h\downarrow 0}\limsup_{\e,\delta\downarrow 0}
\sup_{\sigma,\tau\in\mathbb F^{\e,\delta},
\sigma\leq\tau\leq(\sigma+h)\wedge T}
\P_{x,\iota}\Big(|Z^{\e,\delta}_\tau-Z^{\e,\delta}_\sigma|\geq\eta\Big)=0.
\ea
The convergence is uniform over
$(x,\iota)\in\mathbb R^d\times\{1,\dots,p\}$.
\end{lem}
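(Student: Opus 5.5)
The argument reuses the decomposition from the proof of Lemma~\ref{l:cc}: at arrival times, $Z^{\e,\delta}_{T^\e_k}-x=M^{\e,\delta}_k+A^{\e,\delta}_k$. Since $Z^{\e,\delta}$ is piecewise constant, for stopping times $\sigma\leq\tau\leq(\sigma+h)\wedge T$ we have $Z^{\e,\delta}_\sigma=Z^{\e,\delta}_{T^\e_{N^\e_\sigma-1}}$ and $Z^{\e,\delta}_\tau=Z^{\e,\delta}_{T^\e_{N^\e_\tau-1}}$. The indices $\kappa:=N^\e_\sigma-1\leq \kappa':=N^\e_\tau-1$ are stopping times with respect to the discrete filtration $(\rF^\e_k)$; this follows from \eqref{e:tnu} and the fact that $\sigma,\tau$ are $\mathbb F^{\e,\delta}$-stopping times. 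Hence
\begin{equation*}
|Z^{\e,\delta}_\tau-Z^{\e,\delta}_\sigma|\leq |M^{\e,\delta}_{\kappa'}-M^{\e,\delta}_{\kappa}|+|A^{\e,\delta}_{\kappa'}-A^{\e,\delta}_{\kappa}|,
\end{equation*}
and I will bound the two terms separately.

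\textbf{Drift term.} I will redo the cycle-grouping argument leading to \eqref{e:Amp} and \eqref{e:e3}, but for a block of indices from $\kappa$ to $\kappa'$. Centering via $\mathbf{A}_{V,\mu}$ at the start of each complete cycle inside the block, plus boundary contributions of at most $2p$ single steps of size $\mathcal O(\e/\delta)$, should give
\begin{equation*}
|A^{\e,\delta}_{\kappa'}-A^{\e,\delta}_{\kappa}|\leq C\big(T^\e_{\kappa'+p}-T^\e_{\kappa}\big)+C\sqrt{\e}.
\end{equation*}
Here $T^\e_{\kappa}\leq\sigma$ and $T^\e_{\kappa'}\leq\tau\leq\sigma+h$, so the right-hand side is at most $C(h+R^\e_\tau+\text{(the next }p\text{ run lengths)})+C\sqrt\e$. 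The expectation of the overshoot $R^\e_\tau$ is not directly covered by \eqref{e:R}, because $\tau$ is random. I will therefore bound $T^\e_{\kappa'+p}-T^\e_{\kappa'}$ by the sum of $p+1$ consecutive run lengths after a stopping index. In probability this is $\mathcal O(\e)$: conditionally on $\rF^\e_{\kappa'}$, the future runs are independent copies of $\e\tau_i$. The one subtle point is the run straddling $\tau$. To avoid it, I will bound the maximal run length over $k\leq NT/\e$ instead: $\P(\max_{k\leq NT/\e}\e\tau_{n_k}>\eta')\leq (NT/\e)\max_i\P(\tau_i>\eta'/\e)\leq_C \e^{1+\rho}\eta'^{-2-\rho}/\e\to0$ by $\mathbf{A}_\tau$. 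Combined with \eqref{e:NT}, the drift contribution exceeds $\eta/2$ only with probability at most $\eta/3+o(1)$, once $h$ is small.

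\textbf{Martingale term.} I will use the optional sampling/Doob argument: $\E|M^{\e,\delta}_{\kappa'}-M^{\e,\delta}_\kappa|^2=\E\sum_{k=\kappa+1}^{\kappa'}|\Delta^{\e,\delta}_k|^2$, after truncating the indices at $NT/\e$. Each summand has conditional expectation at most $\frac{\e^2}{\delta^2}\|V\|^2\max_i\sigma_i^2$, so by the Wald identity the sum is $\leq_C \frac{\e^2}{\delta^2}\,\E(\kappa'-\kappa)$. The main obstacle is bounding $\E(\kappa'-\kappa)$, the expected number of renewals in the window $[\sigma,\sigma+h]$, by $C(h+\e)/\e$. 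For this I plan to apply Lemma~\ref{l:Etnu} conditionally at the stopping index $\kappa+1$, using the strong Markov (renewal) property of the embedded cycle. The number of renewals after $T^\e_{\kappa+1}$ within time $h$ has conditional expectation at most $(h+K\e)/(\mu_*\e)$, uniformly in the current phase $\iota$. This gives $\E|M_{\kappa'}-M_\kappa|^2\leq_C \frac{\e}{\delta^2}(h+K\e)\to\varkappa^2 C h$, and Chebyshev's inequality yields the bound $\leq_C h/\eta^2$. This vanishes as $h\downarrow0$, uniformly in $(x,\iota)$, since all constants depend only on $\|V\|$, $\|\nabla V\|$ and the law of the $\tau_i$.
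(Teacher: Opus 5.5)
The drift part of your argument is essentially sound, but the martingale part rests on a false claim: $\kappa=N^\e_\sigma-1$ and $\kappa'=N^\e_\tau-1$ are \emph{not} stopping times for $(\rF^\e_k)$. What \eqref{e:tnu} shows is that $N^\e_t$ is a stopping time. For $N^\e_t-1$ one has $\{N^\e_t-1=m\}=\{T^\e_m\leq t<T^\e_{m+1}\}\in\rF^\e_{m+1}$, so it looks one run ahead. Accordingly, $\bI_{\{\kappa<k\leq\kappa'\}}=\bI_{\{\sigma<T^\e_k\leq\tau\}}$ is not $\rF^\e_{k-1}$-measurable, and $M^{\e,\delta}_{\kappa'}-M^{\e,\delta}_\kappa$ is not a martingale transform. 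The Wald step therefore fails at both ends. At the lower end, $\Delta^{\e,\delta}_{\kappa+1}=\Delta^{\e,\delta}_{N^\e_\sigma}$ contains the run straddling $\sigma$, which is size-biased, so the bound $\frac{\e^2}{\delta^2}\|V\|^2\max_i\sigma_i^2$ on its conditional second moment is not available. At the upper end, the indicator $\bI_{\{T^\e_k\leq\tau\}}$ depends on the very run length entering $\Delta^{\e,\delta}_k$. You spotted this straddling effect at $\tau$ in the drift part, but in the martingale part it enters with a factor $1/\delta$. Two minor points on the drift part: you dropped the age $\sigma-T^\e_\kappa$ from $T^\e_{\kappa'+p}-T^\e_\kappa$, which is harmless because it is bounded by the run straddling $\sigma$. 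Also, the cycle grouping actually gives $C(T^\e_{\kappa'}-T^\e_\kappa)+C\e/\delta$ with no look-ahead.

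The fix is to work with the genuine stopping indices $N^\e_\sigma\leq N^\e_\tau\leq N^\e_T$:
\[
M^{\e,\delta}_{\kappa'}-M^{\e,\delta}_{\kappa}
=\big(M^{\e,\delta}_{N^\e_\tau}-M^{\e,\delta}_{N^\e_\sigma}\big)+\Delta^{\e,\delta}_{N^\e_\sigma}-\Delta^{\e,\delta}_{N^\e_\tau}.
\]
For the first term, optional sampling together with your renewal-count bound at the stopping index $N^\e_\sigma=\kappa+1$ gives $\E|M^{\e,\delta}_{N^\e_\tau}-M^{\e,\delta}_{N^\e_\sigma}|^2\leq_C\frac{\e}{\delta^2}(h+K\e)$. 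The two boundary increments are bounded by $\frac{\|V\|}{\delta}\big(\max_{k\leq N^\e_T}(T^\e_k-T^\e_{k-1})+\e\max_i\mu_i\big)$. Here your maximal-run bound at a fixed scale $\eta'$ is not enough; you need $\max_{k\leq N^\e_T}(T^\e_k-T^\e_{k-1})=o(\delta)$ in probability, which is exactly the estimate \eqref{e:enu} in the proof of Lemma~\ref{l:ucp}. With this correction your direct random-window approach goes through.

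The paper proceeds differently. It uses the Markov property of $(Z^{\e,\delta}_{T^\e_k},n_k)$ at an arrival index to reduce to $\sup_{x,\iota}\P_{x,\iota}(\sup_{t\in[0,h]}|Z^{\e,\delta}_t-x|\geq\eta)$, and then repeats the proof of Lemma~\ref{l:cc} with $T$ replaced by $h$. Only the deterministic-time quantities $N^\e_h$ and $R^\e_h$ then appear, and no Wald identity over a random window is needed. Its reduction to $\sigma,\tau\in\{T^\e_k\}$ implicitly requires the same single-run estimate for the jump at the first arrival after $\sigma$.
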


\begin{proof}
Since $Z^{\e,\delta}$ is piecewise constant, it is enough to consider stopping times
$\sigma,\tau\in\{T_k^\e\}_{k\in\mathbb N_0}$.
Recall that the process
$k\mapsto (Z^{\e,\delta}_{T_k^\e},n_k)$
is a Markov chain with respect to the filtration
$(\rF^{\e,\delta}_{T_k^\e})_{k\in\mathbb N_0}$.
Hence, for any $\eta\in(0,1]$,
\ba
\sup_{\sigma\leq\tau\le\sigma+h}
\P_{x,\iota}\Big(|Z^{\e,\delta}_\tau-Z^{\e,\delta}_\sigma|\geq\eta\Big)
&=
\sup_{\substack{T^\e_{j+k}-T^\e_j\leq h}}
\P_{x,\iota}\Big(
|Z^{\e,\delta}_{T^\e_{j+k}}-Z^{\e,\delta}_{T^\e_j}|
\geq\eta\Big) \\
&\leq
\sup_{T^\e_{j+k}-T^\e_j\leq h}
\E_{x,\iota}\Big[
\P\Big(
|Z^{\e,\delta}_{T^\e_{j+k}}-Z^{\e,\delta}_{T^\e_j}|
\geq\eta \Big| \rF^{\e,\delta}_{T^\e_j} \Big)\Big] \\
&\leq
\sup_{x\in\mathbb R^d,\ 1\leq\iota\le p}
\P_{x,\iota}\Big( \sup_{t\in[0,h]}|Z^{\e,\delta}_t-x| \geq\eta \Big).
\ea
Now we argue exactly as in Lemma~\ref{l:cc}, replacing $R$ by $\eta$ and $T$ by $h$. This yields the analogue of \eqref{e:e1}:
\ba
\P_{x,\iota}\Big(
\sup_{t\in[0,h]}|Z^{\e,\delta}_t-x|
\geq\eta
\Big)
&\leq
\P_{x,\iota}\Big( \max_{k\le N_h^\e}|M_k^{\e,\delta}| \geq\frac{\eta}{2} \Big)
+
\P_{x,\iota}\Big( \max_{k\le N_h^\e}|A_k^{\e,\delta}|
\ge\frac{\eta}{2} \Big).
\ea
Let $\theta\in(0,1]$.
The estimate \eqref{e:NT} takes the form
\ba
\limsup_{\e,\delta\downarrow 0}
\P_\iota\Big(N_h^\e> \frac{Nh}{\e} \Big)
\leq \limsup_{\e,\delta\downarrow 0}
\frac{\e}{Nh}\frac{h+K\e}{\mu_*\e}
=\frac{1}{N\mu_*}
<\frac{\theta}{3}
\ea
for $K=K(\theta)$ large enough.

The estimate \eqref{e:e2} takes the form
\ba
\limsup_{\e,\delta\downarrow 0} \P\Big( \max_{k\leq  Nh/\e} |M^{\e,\delta}_k|> \frac{\eta}{2}\Big)
&\leq \frac{4}{\eta^2}\E |M^{\e,\delta}_{\lfloor Nh/\e\rfloor}|^2\\
&\leq \frac{4\|V\|^2}{\eta^2\delta^2} \frac{Nh}{\e}\e^2 \max_{1\leq i\leq p} \Var\tau_i
\leq\frac{\theta}{3}
\ea
for $h$ small enough.

The estimate \eqref{e:A} takes the form
\ba
\limsup_{\e,\delta\downarrow 0}
\P_{x,\iota}\Big(\max_{k\leq N^\e_h }|A_k^{\e,\delta}|> \frac{\eta}{2}\Big)
&\leq \limsup_{\e,\delta\downarrow 0}\P_\iota\Big(T_{N_h^\e} +\sqrt{\e}> \frac{\eta}{2C}\Big)\\
&\leq \limsup_{\e,\delta\downarrow 0}\P_\iota\Big(R^\e_h > \frac{\eta}{4C}\Big)
+ \limsup_{\e,\delta\downarrow 0}\P\Big(h +\sqrt{\e}> \frac{\eta}{4C}\Big)\\
&=0
\ea
for $h$ small enough, where for the estimate of the overshoot $R^\e_h$ we used \eqref{e:R}.

Overall, these estimates imply that
\ba
\lim_{h\downarrow 0}\limsup_{\e,\delta\downarrow 0} \sup_{x,\iota}
\P_{x,\iota}\Big( \sup_{t\in[0,h]}|Z^{\e,\delta}_t-x| \geq\eta \Big)= 0.
\ea
This proves the claim.
\end{proof}

Finally we show that the piecewise linear process $X^{\e,\delta}$ and the associated piecewise constant process $Z^{\e,\delta}$
are close in the u.c.p.\ sense.

\begin{lem}
\label{l:ucp}
For any $T\in[0,\infty)$ and any $\eta\in(0,1]$
\ba
\lim_{\e,\delta \downarrow 0} \sup_{x\in\bR^d, \iota\in \{1,\dots, p\}}
\P_{x,\iota}\Big(\sup_{t\in[0,T]}|Z^{\e,\delta}_t - X^{\e,\delta}_t|\geq \eta\Big)=0.
\ea
\end{lem}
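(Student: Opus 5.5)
The plan is to reduce the sup-norm difference to a maximum of individual run displacements and then control the longest run time over a horizon of order $1/\e$. On each interval $[T^\e_k,T^\e_{k+1})$ we have
\begin{equation*}
|X^{\e,\delta}_t-Z^{\e,\delta}_t|=\frac{1}{\delta}\big|V_{n_k}(X^{\e,\delta}_{T^\e_k})\big|(t-T^\e_k)\leq \frac{\|V\|}{\delta}\,(T^\e_{k+1}-T^\e_k).
\end{equation*}
Hence
\begin{equation*}
\sup_{t\in[0,T]}|Z^{\e,\delta}_t-X^{\e,\delta}_t|\leq \frac{\|V\|}{\delta}\max_{1\leq k\leq N^\e_T}(T^\e_k-T^\e_{k-1}).
\end{equation*}
This bound no longer depends on $x$, so the supremum over $x$ is automatic. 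Only the law of the run times under $\P_\iota$ remains, and there are finitely many $\iota$.

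Next we localize $N^\e_T$ exactly as in \eqref{e:NT}. By Lemma \ref{l:Etnu} and Markov's inequality, for any $\theta>0$ there is $N$ with $\P_\iota(N^\e_T>NT/\e)<\theta$ for all $\e\in(0,1]$. On the complementary event the maximum runs over at most $m:=\lfloor NT/\e\rfloor$ indices. The increments $T^\e_k-T^\e_{k-1}$ are independent and distributed as $\e\tau_{n_{k-1}}$, so a union bound together with Markov's inequality of order $2+\rho$ (assumption $\mathbf{A}_\tau$) gives
\begin{equation*}
\P_\iota\Big(\frac{\e}{\delta}\max_{k\leq m}\tau_{n_{k-1}}\geq \frac{\eta}{\|V\|}\Big)
\leq m\,\max_i\P\Big(\tau_i\geq \frac{\eta\delta}{\|V\|\e}\Big)
\leq_C \frac{NT}{\e}\Big(\frac{\e}{\delta\eta}\Big)^{2+\rho}\max_i\E\tau_i^{2+\rho}.
\end{equation*}

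By $\mathbf{A}_\varkappa$ we have $\e/\delta=\mathcal O(\sqrt\e)$. The right-hand side is therefore $\mathcal O(\e^{-1}\e^{1+\rho/2})=\mathcal O(\e^{\rho/2})\to0$. Letting $\e,\delta\downarrow0$ and then $\theta\downarrow0$ proves the claim.

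The only step that needs care is this maximum estimate. Second moments alone would give only an $\mathcal O(1)$ bound, since $m\,(\e/\delta)^2$ stays bounded. The extra integrability in $\mathbf{A}_\tau$ (the $\rho>0$ in the Lyapunov condition) is exactly what makes the union bound vanish. One could also use uniform integrability of $\tau_i^2$, which yields $m\,\P(\tau>c/\sqrt\e)=o(1)$, but the Lyapunov form is the direct route.
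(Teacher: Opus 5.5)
Your proof is correct and follows the paper's route: the same reduction to $\frac{\|V\|}{\delta}\max_{k\le N^\e_T}(T^\e_k-T^\e_{k-1})$, and the same Markov bound of order $2+\rho$ combined with Lemma~\ref{l:Etnu}, which gives $\mathcal O(\e^{\rho/2})$. The only difference is that you truncate $N^\e_T$ at the deterministic level $NT/\e$ and then take a union bound, whereas the paper uses the independence of $\{N^\e_T\ge k\}=\{T^\e_{k-1}\le T\}$ and $T^\e_k-T^\e_{k-1}$ to bound the probability directly by $\max_i\P\big(\tau_i>\eta\delta/(\e\|V\|)\big)\,\E_\iota N^\e_T$, which avoids the auxiliary parameter $\theta$.
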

\begin{proof}
Since by \eqref{e:Xe} the process $t\mapsto X_t^{\e,\delta}$ is a linear
function on each interval $[T^\e_k,T^\e_{k+1}]$, $k\in\mathbb N_0$,
we have
\ba
\P_{x,\iota}\Big(\sup_{t\in[0,T]}|Z^{\e,\delta}_t - X^{\e,\delta}_t|\geq\eta\Big)
&\leq\P_{\iota}\Big(\frac{\|V\|}{\delta}\max_{k\leq N^\e_T}(T_k^\e-T_{k-1}^\e)\geq \eta\Big).
\ea
Since
\ba
\Big\{\max_{k\leq N^\e_T}(T^\e_k-T_{k-1}^\e)\geq \frac{\eta\delta}{\|V\|}\Big\}
=\bigcup_{k=1}^\infty\Big\{T^\e_k-T_{k-1}^\e \geq   \frac{\eta\delta}{\|V\|},k\leq N_T^\e\Big\} ,
\ea
with the help of \eqref{e:tnu} and independence of
$T^\e_k-T_{k-1}^\e$  and $T^\e_{k-1}$ for all $k\in\mathbb N$
we get
\ba
\label{e:enu}
\P_\iota\Big(\max_{k\leq N^\e_T}(T^\e_k-T_{k-1}^\e)\geq \frac{\eta\delta}{\|V\|}\Big)
&\leq \sum_{k=1}^\infty \P_\iota\Big(T^\e_k-T_{k-1}^\e \geq \frac{\eta\delta}{\|V\|},  T^\e_{k-1}\leq  T\Big) \\
&= \sum_{k=1}^\infty \P_\iota\Big(T^\e_k-T_{k-1}^\e \geq \frac{\eta\delta}{\|V\|}\Big)\P_\iota(T^\e_{k-1}\leq T) \\
&\leq \max_{1\leq i\leq p}\P\Big(\tau_i >\frac{\eta\delta}{\e \|V\|}\Big)\sum_{k=1}^\infty \P_\iota(T^\e_{k-1}\leq T)\\
&=\max_{1\leq i\leq p}\P\Big(\tau_i >\frac{\eta\delta}{\e \|V\|}\Big)\E_\iota N_T^\e.
\ea
We use the $(2+\rho)$-integrability of $\tau_i$, Markov's inequality, Lemma \ref{l:Etnu}
and assumption $\mathbf{A}_\varkappa$
to get
\ba
\P\Big(\max_{k\leq N^\e_T}(T^\e_k-T_{k-1}^\e)\geq \frac{\eta\delta}{\|V\|}\Big)
&\leq
\Big(\frac{\e \|V\|}{\eta\delta}\Big)^{2+\rho}\max_{1\leq i\leq p}\E\tau_i^{2+\rho}
\frac{T+\e K}{\mu_*\e}\\
&\leq_C \frac{1}{\e}\Big(\frac{\e}{\delta}\Big)^{2+\rho}
= \Big(\frac{\e}{\delta^2}\Big)^{\frac{2+\rho}{2}} \e^{\rho/2}
\leq_C \e^{\rho/2}
\to 0,\quad \e,\delta\downarrow 0.
\ea
\end{proof}

\section{Asymptotics of the pseudo-characteristic operator of $Z^{\e,\delta}$\label{s:operator}}

To identify the limiting dynamics of the family $\{Z^{\e,\delta}\}$, we study its evolution over one complete cycle.
More precisely, we consider the $p$-step pseudo-characteristic operator $A_{\iota}^{\e,\delta}$
\ba
A_\iota^{\e,\delta }f(x)&=\frac{\E_{x,\iota} f(Z^{\e,\delta}_{T^\e_{p}}) - f(x)}{\E_\iota T_p^\e }
\ea
defined for sufficiently smooth functions $f\colon \bR^d \to \bR$.
The use of a complete cycle is essential, since the leading first-order contribution then vanishes by the centering condition
$\mathbf{A}_{V,\mu}$.

\begin{lem}
\label{l:A}
For any $f\in C^3_b(\bR^d,\bR)$,
$x\in\bR^d$, any $\iota\in \{1,\dots,p\}$ we have
\ba
\label{e:Aiota}
\lim_{\e,\delta \downarrow 0} \sup_{x\in\mathbb R^d,\iota\in\{1,\dots,p\}}
|A_\iota^{\e,\delta }f(x) -A f(x)|=0,
\ea
where the operator $A$ is defined in \eqref{e:opA}.
\end{lem}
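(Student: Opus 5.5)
The plan is a direct Taylor expansion of $f$ along one complete cycle. Write the cycle positions as $x_0=x$ and $x_k=x_{k-1}+\frac{\e}{\delta}V_{n_{k-1}}(x_{k-1})\tau^{(k)}$ for $k=1,\dots,p$, where the $\tau^{(k)}\stackrel{\di}{=}\tau_{n_{k-1}}$ are independent. Then $Z^{\e,\delta}_{T^\e_p}=x_p$ and $\E_\iota T^\e_p=\e\mu$. Put $h=\e/\delta$, so that by $\mathbf A_\varkappa$ we have $h^2/\e\to\varkappa^2$.

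\textbf{First step: expand the cycle displacement to second order in $h$.} Since $x_{k-1}-x=\mathcal O(h)$, Taylor's formula for $V_{n_{k-1}}\in C^2_b$ gives
\begin{equation*}
x_p-x = h\sum_{k=1}^p V_{n_{k-1}}(x)\tau^{(k)} + h^2\sum_{k=1}^p\sum_{l<k} DV_{n_{k-1}}[V_{n_{l-1}}](x)\,\tau^{(k)}\tau^{(l)} + \rho_3 ,
\end{equation*}
with $|\rho_3|\leq_C h^3\widehat\tau^{\,3}$, where $\widehat\tau=\sum_k\tau^{(k)}$. Only $2+\rho$ moments are available, so I will truncate instead: use the bound $|\rho_3|\leq_C \min\{h^2\widehat\tau^2,\,h^3\widehat\tau^3\}\leq_C h^{2+\rho}\widehat\tau^{2+\rho}$.

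\textbf{Second step: expand $f$ and take expectations.} Write
\begin{equation*}
f(x_p)-f(x)=\langle\nabla f(x),x_p-x\rangle+\tfrac12\langle\nabla^2f(x)(x_p-x),x_p-x\rangle+\mathcal O\big(\min\{|x_p-x|^2,|x_p-x|^3\}\big).
\end{equation*}
The remainder is again $\leq_C h^{2+\rho}\widehat\tau^{2+\rho}$. After dividing by $\e\mu$, every remainder is $\leq_C h^{2+\rho}/\e\to0$, uniformly in $x$ and $\iota$.

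The first-order term has expectation $h\sum_i\mu_iV_i(x)=0$ by $\mathbf A_{V,\mu}$. This cancellation is exactly why the full cycle is used.

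The second-order contributions, divided by $\e\mu$, converge to $\frac{\varkappa^2}{\mu}$ times
\begin{equation*}
\sum_{l<k}\mu_{n_{k-1}}\mu_{n_{l-1}}\langle\nabla f,DV_{n_{k-1}}[V_{n_{l-1}}]\rangle+\frac12\E\langle\nabla^2 f\,S,S\rangle,\qquad S=\sum_kV_{n_{k-1}}\tau^{(k)} .
\end{equation*}
By independence,
\begin{equation*}
\E\langle\nabla^2 f\,S,S\rangle=\sum_i\sigma_i^2\langle\nabla^2fV_i,V_i\rangle+\langle\nabla^2 f\,m,m\rangle ,
\end{equation*}
where $m=\sum_i\mu_iV_i=0$. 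This yields the diffusion part of $A$.

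\textbf{Main obstacle: the cyclic start $\iota$.} The drift sum above is ordered starting from $\iota$, whereas \eqref{e:opA} is ordered by $j<i$ starting from $1$. I must show that $\sum_{\text{ordered pairs from }\iota}\mu_a\mu_b DV_a[V_b]$ does not depend on the cyclic shift. Moving the first index $\iota$ to the end changes the sum by
\begin{equation*}
\sum_{b\neq\iota}\mu_\iota\mu_b\big(DV_\iota[V_b]-DV_b[V_\iota]\big).
\end{equation*}
Use $\sum_{b\neq\iota}\mu_bV_b=-\mu_\iota V_\iota$ and linearity in the bracketed argument. The first part gives $-\mu_\iota^2DV_\iota[V_\iota]$.

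The second part is handled by differentiating $\mathbf A_{V,\mu}$, which gives $\sum_b\mu_bDV_b[W]=0$ for every field $W$. Taking $W=V_\iota$ shows that it equals $+\mu_\iota^2DV_\iota[V_\iota]$. Hence the change is zero, and every $\iota$ gives the same sum.

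Uniformity in $x$ follows because all constants depend only on $\|f\|_{C^3}$, $\|V\|_{C^2}$ and the moments of $\tau$.
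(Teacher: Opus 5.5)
Your proposal is correct and is essentially the paper's argument. It uses a second-order expansion over one full cycle in which $\mathbf{A}_{V,\mu}$ kills the $\mathcal O(\e/\delta)$ term, the ordered pairs $l<k$ generate the $DV_i[V_j]$ drift, a second use of centering (your $\langle\nabla^2 f\,m,m\rangle=0$, i.e.\ the paper's identity \eqref{e:H2}) replaces $\lambda_i^2$ by $\sigma_i^2$, and the same cyclic-shift computation via the differentiated centering condition handles general $\iota$. The only differences are bookkeeping: you expand the composed displacement $x_p-x$ and Taylor-expand $f$ once at $x$, whereas the paper telescopes $f$ run by run and expands $g_i=\langle\nabla f,V_i\rangle$ at the intermediate points, and your truncation $\min\{a^2,a^3\}\le a^{2+\rho}$ plays the role of the paper's H\"older-type bound for the remainders.
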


\begin{proof}
In order prove \eqref{e:Aiota} we first expand the contribution of the first-order Taylor terms of
$f(Z^{\e,\delta}_{T^\e_{p}})$
and use the centering condition to identify the order-dependent drift.
We then treat the second-order Taylor terms, which determine the diffusion coefficient.
Finally, we show that the resulting operator is invariant under cyclic shifts of the initial direction.

1. Let $\iota=1$.
For $i=1,\dots,p$,
denote $\Delta_i T^\e:=T_{i}^\e-T_{i-1}^\e\stackrel{\di}{=}\e \tau_i$. Then, the Taylor formula yields
\ba
\label{e:f}
\E_{x,1}f(Z^{\e,\delta}_{T_p^\e})& -f(x)
=\sum_{i=1}^{p} \E_{x,1}\Big[f(Z^{\e,\delta}_{T_{i}^\e})- f(Z^{\e,\delta}_{T_{i-1}^\e})\Big]\\
&= \sum_{i=1}^{p}\E_{x,1}
\Big[f \Big(Z^{\e,\delta}_{T_{i-1}^\e} + \frac{1}{\delta}V_{i}(Z^{\e,\delta}_{T_{i-1}^\e} )\Delta_i T^\e\Big)
-f (Z^{\e,\delta}_{T_{i-1}^\e} )\Big]\\
&= \frac{1}{\delta}\sum_{i=1}^{p}
\E_{x,1}
\langle \nabla f(Z^{\e,\delta}_{T_{i-1}^\e} ) , V_{i} (Z^{\e,\delta}_{T_{i-1}^\e} )\Delta_i T^\e\rangle\\
&+ \frac{1}{\delta^2} \sum_{i=1}^{p}
\E_{x,1} \int_0^1  \Big\langle \nabla^2 f\Big(Z^{\e,\delta}_{T_{i-1}^\e}
+  \frac{\theta}{\delta}V_{i} (Z^{\e,\delta}_{T_{i-1}^\e} )\Delta_i T^\e\Big) V_i (Z^{\e,\delta}_{T_{i-1}^\e} )\Delta_i T^\e,
V_{i} (Z^{\e,\delta}_{T_{i-1}^\e} )\Delta_i T^\e\Big\rangle(1-\theta)\,\di \theta.
\ea
a) We expand and estimate the first term in the last sum in \eqref{e:f}.
To exploit the centering condition in the first-order Taylor contribution, set
\ba
\label{e:g}
g_i(x):=\langle \nabla f(x),  V_i (x)\rangle.
\ea
Then $\mathbf{A}_{V,\mu}$ implies
\ba
\sum_{i=1}^{p } \mu_i g_i(x) = \Big\langle \nabla f(x), \sum_{i=1}^{p }\mu_i V_i(x)\Big\rangle \equiv 0,\quad
x\in\bR^d.
\ea
Consequently
we get
\ba
\frac{1}{\delta}\sum_{i=1}^{p} \E_{x,1} \Big[ g_i (Z^{\e,\delta}_{T_{i-1}^\e} ) \Delta_i T^\e\Big]
&=  \frac{\e}{\delta} \sum_{i=1}^{p} \mu_i \E_{x,1}\Big[ g_i(Z^{\e,\delta}_{T_{i-1}^\e} ) - g_i(x) \Big]\\
&=  \frac{\e}{\delta} \sum_{i=2}^{p}  \mu_i \E_{x,1}\Big[ g_i(Z^{\e,\delta}_{T_{i-1}^\e} ) - g_i(x) \Big]\\
&=   \frac{\e}{\delta}\sum_{i=2}^{p}\mu_i\sum_{j=1}^{i-1}   \E_{x,1}
\Big[ g_i(Z^{\e,\delta}_{T_{j}^\e} ) - g_i(Z^{\e,\delta}_{T^\e_{j-1}}) \Big].
\ea
For each $j<i$, we have
\ba
 \frac{\e}{\delta}\E_{x,1}[g_i(Z^{\e,\delta}_{T_{j}^\e} ) &- g_i(Z^{\e,\delta}_{T^\e_{j-1}})]
= \frac{\e}{\delta^2}\E_{x,1}\Big[ \Delta_j T^\e
\int_0^1 \langle \nabla
g_i(Z^{\e,\delta}_{T_{j-1}^\e}
+\frac{\theta \Delta_j T^\e}{\delta }
V_j(Z^{\e,\delta}_{T_{j-1}^\e} )), V_j(Z^{\e,\delta}_{T_{j-1}^\e} )\rangle\,\di \theta \Big]\\
&=  \frac{\e^2}{\delta^2} \mu_j \langle \nabla g_i(x), V_j(x) \rangle\\
&+
\frac{\e}{\delta^2}
\E_{x,1}\Big[\Delta_j T^\e
\int_0^1 \Big(\langle \nabla
g_i(Z^{\e,\delta}_{T_{j-1}^\e}
+ \frac{\theta \Delta_j T^\e}{\delta }V_j(Z^{\e,\delta}_{T_{j-1}^\e} )), V_j(Z^{\e,\delta}_{T_{j-1}^\e} ) \rangle
- \langle \nabla g_i(x), V_j(x)\rangle\Big)\,\di \theta \Big]\\
&= \frac{\e^2}{\delta^2} \mu_j \langle \nabla g_i(x), V_j(x) \rangle + \E_{x,1}R^{\e,\delta}_{ij}.
\ea
Overall,
\ba
\frac{1}{\delta}\sum_{i=1}^{p}
\E_{x,1}
\langle \nabla f(Z^{\e,\delta}_{T_{i-1}^\e} ) , V_{i} (Z^{\e,\delta}_{T_{i-1}^\e} )\Delta_i T^\e\rangle
= \frac{\e^2}{\delta^2}\sum_{i=2}^{p}\sum_{j=1}^{i-1}    \mu_i\mu_j \langle \nabla g_i(x), V_j(x) \rangle +
 \sum_{i=2}^{p}\sum_{j=1}^{i-1}    \mu_i \E_{x,1}R^{\e,\delta}_{ij}.
\ea
Notice that only indices $j<i$ appear in our analysis:
the value of $V_i$ at the beginning of the $i$-th run is affected by the displacements
generated by the preceding vector fields $V_1,\dots,V_{i-1}$.
This is the point at which the prescribed cyclic ordering enters the limiting drift.

To estimate the remainder, we note that
for all $x,y,z\in\bR^d$
\ba
|\langle\nabla g_i(y+ z), V_j(y) \rangle - \langle \nabla g_i(x), V_j(x)\rangle|
&\leq
|\langle\nabla g_i(y+ z) - \nabla g_i(y), V_j(y) \rangle |\\
&+|\langle\nabla g_i(y), V_j(y) \rangle - \langle \nabla g_i(x), V_j(x) \rangle|\\
&\leq C(|z|  + |x-y| ),
\ea
where the constant $C$ depends only on $\|\nabla^k f\|$, $k=1,2$, and $\|\nabla^k V\|$, $k=0,1$.
Thus, taking into account the Kac scaling $\e/\delta^2\to \varkappa^2$ we get
\ba
\E_{x,1} | R^{\e,\delta}_{ij}|
&\leq_C
\frac{\e}{\delta^2}
\E_{x,1}\Big[\Delta_j T^\e
\int_0^1 \Big(\frac{\theta \Delta_j T^\e}{\delta }|V_j(Z^{\e,\delta}_{T_{j-1}^\e})|
+ |Z^{\e,\delta}_{T_{j-1}^\e}-x|\Big)\,\di \theta \Big]\\
& \leq_C
\frac{\e}{\delta^3 }\E (\Delta_j T^\e)^2
+ \frac{\e}{\delta^2 }\E\Delta_j T^\e \E_{x,1}  |Z^{\e,\delta}_{T_{j-1}^\e}-x| \\
&\leq_C \frac{\e^3 }{\delta^3}
+ \frac{\e^2}{\delta^2 }\sum_{k=1}^{j-1}\E_{x,1}|Z^{\e,\delta}_{T_{k}^\e}-Z^{\e,\delta}_{T_{k-1}^\e}|\\
&\leq_C  \frac{\e^3 }{\delta^3}
+ \frac{\e^3}{\delta^4}
\\
&\leq_C  \e .
\ea
b) We estimate the second term in the last sum in \eqref{e:f}. For each $i=1,\dots,p$ and $x,y,z\in\bR^d$ we have
\ba
\Big|\langle \nabla^2 f(y + z) V_i (y), V_i (y)\rangle  - \langle \nabla^2 f(x) V_i(x), V_i(x)\rangle\Big|
&\leq
\Big|\langle \nabla^2 f(y + z) V_i(y), V_i(y)\rangle - \langle \nabla^2 f(y) V_i(y), V_i (y)\rangle\Big|\\
&+\Big|\langle \nabla^2 f(y) V_i (y), V_i (y)\rangle - \langle \nabla^2 f(x) V_i (x), V_i (x)\rangle\Big|.
\ea
Note that $z\mapsto \langle \nabla^2 f(y + z) V_i(y), V_i(y)\rangle$ is bounded and smooth.
We use here the weaker H\"older-type bound with exponent $\rho\in (0,1)$ in order to estimate
the remainder using only the assumed $(2+\rho)$-moments of the run times.
We have
\ba
\Big|\langle \nabla^2 f(y + z) V_i(y), V_i(y)\rangle - \langle \nabla^2 f(y) V_i(y), V_i (y)\rangle\Big|
\leq_C |z|\wedge 1
\leq_C  |z|^\rho\wedge 1
\leq_C  |z|^{\rho},
\ea
where the constant in this estimate depends only on $\|\nabla^k f\|$, $k=2,3$, and $\|V\|$.
Analogously, since $y\mapsto \langle \nabla^2 f(y) V_i (y), V_i (y)\rangle$ is bounded and smooth, we have
\ba
\Big|\langle \nabla^2 f(y) V_i (y), V_i (y)\rangle - \langle \nabla^2 f(x) V_i (x), V_i (x)\rangle\Big|
\leq_C|x-y|^{\rho},
\ea
where the constant depends on $\|\nabla^k f\|$, $k=2,3$, and $\|\nabla^k V\|$, $k=0,1$.
Therefore,
\ba
& \frac{1}{\delta^2}\sum_{i=1}^p
\E_{x,1} \int_0^1  \Big\langle \nabla^2 f\Big(Z^{\e,\delta}_{T_{i-1}^\e}
+  \frac{\theta}{\delta}V_i (Z^{\e,\delta}_{T_{i-1}^\e} )\Delta_i T^\e\Big) V_i (Z^{\e,\delta}_{T_{i-1}^\e} )\Delta_i T^\e,
V_i (Z^{\e,\delta}_{T_{i-1}^\e} )\Delta_i T^\e\Big\rangle(1-\theta)\,\di \theta\\
&=  \frac{\e^2}{2\delta^2}\sum_{i=1}^p \lambda_i^2
\langle \nabla^2 f(x)V_i (x), V_i (x)\rangle
+\sum_{i=1}^p \E_{x,1} R^{\e,\delta}_i,
\ea
where for each $i=1,\dots,p$
\ba
\E_{x,1} |R^{\e,\delta}_i|
&\leq_C
\frac{1}{\delta^2}
\E_{x,1} \int_0^1 (\Delta_i T^\e)^2\Big(
\Big| \frac{\theta}{\delta}V_i (Z^{\e,\delta}_{T_{i-1}^\e})\Delta_i T^\e\Big|^\rho
+ |Z^{\e,\delta}_{T_{i-1}^\e} - x |^\rho
\Big)(1-\theta)\,\di \theta\\
&\leq_C
\frac{1}{\delta^{2+\rho}}
\E_{x,1} (\Delta_i T^\e)^{2+\rho}
+
\frac{1}{\delta^{2}}
\E_{x,1}\Big[ (\Delta_i T^\e)^{2}
\sum_{j=1}^{i-1}|Z^{\e,\delta}_{T_{j}^\e} - Z^{\e,\delta}_{T_{j-1}^\e}|^\rho\Big]\\
&\leq_C
\frac{1}{\delta^{2+\rho}}
\E_{x,1} (\Delta_i T^\e)^{2+\rho}
+
\frac{1}{\delta^{2+\rho}}\sum_{j=1}^{i-1}
\E_{x,1}\Big[ (\Delta_i T^\e)^{2}(\Delta_j T^\e)^\rho\Big]\\
&\leq_C \frac{\e^{2+\rho}}{\delta^{2+\rho}}
\leq_C  \e^{1+\frac{\rho}{2}}.
\ea
2. Combining the results from a) and b) we get
\ba
\label{e:opA1}
\frac{\E_{x,1}f(Z^{\e,\delta}_{T_p^\e}) -f(x)}{\E_1 T^\e_p}
&=\frac{1}{\mu\e}\frac{\e^2}{\delta^2}
\Big(
\sum_{i=2}^{p}\sum_{j=1}^{i-1}
\mu_i\mu_j \langle \nabla \langle \nabla f(x),  V_i (x)\rangle, V_j(x) \rangle
+\frac{1}{2}  \sum_{i=1}^{p}  \lambda_i^2
\langle \nabla^2 f(x)V_i (x), V_i (x)\rangle
+\E_{x,1} R^{\e,\delta}\Big) \\
&=\frac{\varkappa^2}{\mu}
\Big(
\sum_{j<i} \mu_i\mu_j \langle \nabla \langle \nabla f(x),  V_i (x)\rangle, V_j(x) \rangle
+\frac{1}{2}  \sum_i  \lambda_i^2 \langle \nabla^2 f(x)V_i (x), V_i (x)\rangle \Big)
+ r^{\e,\delta}_{x,1},
\ea
where
\ba
\lim_{\e,\delta\to 0}|r^{\e,\delta}_{x,1}|=0.
\ea
Thus, before using the centering assumption $\mathbf{A}_{V,\mu}$ a second time,
the limiting second-order contribution contains both the individual quadratic
terms associated with each vector field and cross terms generated by successive vector
fields within the cycle.

Recall the identity
\ba
\langle \nabla \langle \nabla f,  V_i\rangle, V_j \rangle
= \langle \nabla^2 f  V_i , V_j \rangle + \langle \nabla f,  D V_i[V_j] \rangle,
\ea
where $D V_i[V_j]=(DV_i)V_j$ is the directional derivative of $V_i$ in direction $V_j$ defined in \eqref{e:DVV}.
We can rewrite
\ba
\label{e:H1}
\frac{\E_{x,1}f(Z^{\e,\delta}_{T_p^\e}) -f(x)}{\E_1 T^\e_p}
&= \frac{\varkappa^2}{\mu} \sum_{j<i} \mu_i\mu_j \langle \nabla^2 f(x)  V_i(x) , V_j(x) \rangle
+ \frac{\varkappa^2}{\mu} \sum_{j<i} \mu_i\mu_j  \langle \nabla f,  D V_i[V_j](x) \rangle\\
&+\frac{\varkappa^2}{2\mu} \sum_{i}  \lambda_i^2 \langle \nabla^2 f(x)V_i(x), V_i(x)\rangle
+r^{\e,\delta}_{x,1}
\ea
The Hessian cross terms can be simplified once more using the centering assumption $\mathbf{A}_{V,\mu}$:
\ba
\label{e:H2}
0=\Big\langle \nabla^2 f \sum_{i=1}^{p} \mu_i V_i,  \sum_{j=1}^{p} \mu_j V_j\Big\rangle
=\sum_{i} \mu_i^2 \langle \nabla^2 f V_i, V_i\rangle
+ 2 \sum_{j<i} \mu_i\mu_j \langle \nabla^2 f V_i, V_j\rangle,
\ea
Substituting \eqref{e:H2} into \eqref{e:H1} replaces the second moments $\lambda_i^2=\sigma_i^2+\mu_i^2$ by the variances $\sigma_i^2$,
leaving the order-dependent first-order term unchanged, so that
\ba
\label{e:AA}
\frac{\E_{x,1}f(Z^{\e,\delta}_{T_p^\e}) -f(x)}{\E_1 T^\e_p}
&=
\frac{\varkappa^2}{2\mu} \sum_i  \sigma_{i}^2
\langle \nabla^2 f(x)V_i(x), V_i(x)\rangle
+\frac{\varkappa^2}{\mu} \sum_{j<i}
\mu_i\mu_j  \langle \nabla f(x),  D V_i[V_j](x) \rangle
+r^{\e,\delta}_{x,1}\\
&= Af(x) + +r^{\e,\delta}_{x,1},
\ea
with the operator $A$ defined in \eqref{e:opA}.

\smallskip
\noindent
3.
It remains to show that the limiting operator is independent of the initial direction $\iota$.
The preceding remainder estimates are uniform in $x$ and $\iota$,
so it suffices to verify that the limiting differential operator is invariant under cyclic shifts of the indices.

Let
\ba
Af=A_1 f:=\frac{\varkappa^2}{2\mu} \sum_i  \sigma_{i}^2
\langle \nabla^2 fV_i , V_i \rangle
+\frac{\varkappa^2}{\mu} \sum_{j<i}
\mu_i\mu_j  \langle \nabla f ,  D V_i[V_j]  \rangle
\ea
be given in \eqref{e:AA}. Let the initial direction be $\iota=2$, and let
 \ba
A_2 f & :=\frac{\varkappa^2}{2\mu} \sum_i  \sigma_{1+i\mathrm{mod}\,p}^2
\langle \nabla^2 f(x)V_{1+i\mathrm{mod}\,p}(x), V_{1+i\mathrm{mod}\,p}(x)\rangle\\
&+\frac{\varkappa^2}{\mu}
\sum_{j<i} \mu_{1+i\mathrm{mod}\,p}\mu_{1+j\mathrm{mod}\,p}  \langle \nabla f,
D V_{1+i\mathrm{mod}\,p}[V_{1+j\mathrm{mod}\,p}] \rangle
\ea
be the corresponding limiting operator with all indices cyclically shifted by 1.
It is clear that the second order parts of $A_1$ and $A_2$ coincide. Therefore,
\ba
A_2f - A_1 f &=\sum_{j=2}^{p} \mu_1\mu_j \Big(\langle \nabla f,  D V_1[V_j] \rangle - \langle \nabla f,  D V_j[V_1] \rangle\Big)\\
&=\mu_1 \Big\langle\nabla f, DV_1\Big[ \sum_{j=2}^{p} \mu_jV_j   \Big]  -  \sum_{j=2}^{p} \mu_j DV_j[V_1]    \Big\rangle.
\ea
Since by assumption $\mathbf{A}_{V,\mu}$
\ba
 \sum_{j=2}^{p} \mu_j V_j=-\mu_1 V_1,
\ea
we get
\ba
  DV_1\Big[ \sum_{j=2}^{p} \mu_jV_j   \Big] =-\mu_1 DV_1[V_1]
\ea
and
\ba
\sum_{j=2}^{p} \mu_j DV_j[V_1] = \Big(D\sum_{j=2}^{p} \mu_jV_j\Big)[V_1]
=-\mu_1 DV_1[V_1].
\ea
Therefore,
\ba
A_2f - A_1f &=\mu_1 \Big\langle\nabla f, -DV_1 [ V_1 ] +   DV_1 [ V_1 ]    \Big\rangle\equiv  0.
\ea
Repeating the same argument for successive cyclic shifts shows that the limiting operator
is independent of the initial direction $\iota$. Notice that this invariance concerns cyclic shifts only.
Example \ref{exa:UV} illustrates that changing the cyclic ordering itself may change the limiting drift.
\end{proof}

\section{Martingale problem and proof of Theorem \ref{t:A}\label{s:Aconv}}

We now combine the approximation of the pseudo-characteristic operator obtained in section \ref{s:operator} with the relative
compactness established in section \ref{s:Awrc}. We first verify an approximate martingale problem for $\{Z^{\e,\delta}\}$,
then identify every weak limit with the diffusion generated by $A$.

\begin{lem}
\label{l:estA}
For any $f\in C^3_b(\bR^d,\bR)$ and
any
$\eta\in(0,1]$
\ba
\E_{x,\iota}\Big| f(Z^{\e,\delta}_{T_p^\e}) - f(x) - \int_0^{T^\e_p} Af(Z^{\e,\delta}_u)\,\di u\Big|
\leq \eta \E_{\iota} T^\e_p
\ea
uniformly in $x\in\bR^d$ and $\iota\in\{1,\dots,p\}$ for sufficiently small $\e,\delta$.
Consequently,
\ba
\Big|\E\Big[ f(Z^{\e,\delta}_{T_{k+p}^\e}) - f(Z^{\e,\delta}_{T_{k}^\e})
- \int_{T_{k}^\e}^{T_{k+p}^\e} Af(Z^{\e,\delta}_u) \,\di u\     \Big|\rF^{\e,\delta}_{T_{k}^\e} \Big]\Big|
\leq \eta  \E\Big[ T_{k+p}^\e-T_{k}^\e \Big|\rF^{\e,\delta}_{T_{k}^\e} \Big]
\ea
uniformly in  $k\in\mathbb N_0$ for sufficiently small $\e,\delta$.
\end{lem}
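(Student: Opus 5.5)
The plan is to reduce the estimate to the one-cycle expansion already obtained in the proof of Lemma~\ref{l:A}. That expansion gives the identity $\E_{x,\iota} f(Z^{\e,\delta}_{T^\e_p})-f(x)=\E_\iota T^\e_p\,(Af(x)+r^{\e,\delta}_{x,\iota})$, where $r^{\e,\delta}_{x,\iota}\to0$ uniformly in $(x,\iota)$.

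\textbf{Step 1 (integral term).} Since $Z^{\e,\delta}$ is piecewise constant, $\int_0^{T^\e_p}Af(Z^{\e,\delta}_u)\,\di u=\sum_{i=1}^p Af(Z^{\e,\delta}_{T^\e_{i-1}})\Delta_iT^\e$. For $f\in C^3_b$ and $V_i\in C^2_b$, the function $Af$ is bounded and Lipschitz. Hence
\begin{equation*}
\Big|\int_0^{T^\e_p}Af(Z^{\e,\delta}_u)\,\di u-Af(x)T^\e_p\Big|\leq_C T^\e_p\max_{i\le p}|Z^{\e,\delta}_{T^\e_{i-1}}-x|\leq_C \frac{(T^\e_p)^2}{\delta}.
\end{equation*}
Its expectation is of order $\e^2/\delta=\mathcal O(\e^{3/2})=o(\E_\iota T^\e_p)$, uniformly in $(x,\iota)$.

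\textbf{Step 2 (increment of $f$).} The Taylor decomposition \eqref{e:f} and the estimates of $R^{\e,\delta}_{ij}$ and $R^{\e,\delta}_i$ in the proof of Lemma~\ref{l:A} are all bounds on $\E|\cdot|$ of order $o(\e)$. What remains is the following difference.
\begin{itemize}
\item The first-order term $\frac1\delta\sum_i g_i(Z^{\e,\delta}_{T^\e_{i-1}})\Delta_iT^\e$ is replaced by its conditional mean.
\item The quadratic term has $(\Delta_iT^\e)^2$ in place of $\e^2\lambda_i^2$.
\end{itemize}
The quadratic discrepancy is harmless in the weaker sense below, but not in $L^1$.

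\textbf{Main obstacle.} Here I expect the statement exactly as worded, with the absolute value inside the expectation, to fail. The centred first-order part $\frac1\delta\sum_i g_i(Z^{\e,\delta}_{T^\e_{i-1}})(\Delta_iT^\e-\e\mu_i)$ has $L^1$-norm of order $\e\sigma/\delta\asymp\sqrt\e$, which is much larger than $\eta\,\E_\iota T^\e_p\asymp\e$. Likewise $Af(x)(T^\e_p-\E_\iota T^\e_p)$ has $L^1$-norm of order $\e$ with a constant that does not shrink with $\eta$. So an honest proof of $\E_{x,\iota}|\cdots|\le\eta\,\E_\iota T^\e_p$ is only available in the degenerate case $\sigma_i=0$ for all $i$, or for $f$ with $\langle\nabla f,V_i\rangle\equiv0$.

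In the degenerate case $\sigma_i=0$ the proof closes as follows. Each $\Delta_iT^\e=\e\mu_i$ is deterministic, so the first-order sum equals $\frac{\e}{\delta}\sum_i\mu_i(g_i(Z^{\e,\delta}_{T^\e_{i-1}})-g_i(x))$ pathwise, by $\mathbf A_{V,\mu}$. Part 1a) of the proof of Lemma~\ref{l:A} then holds pathwise. Together with Steps 1--2, this yields the claim with error $o(\e)$.

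In general, what Steps 1--2 plus Lemma~\ref{l:A} actually prove is
\begin{equation*}
\Big|\E_{x,\iota}\Big[f(Z^{\e,\delta}_{T^\e_p})-f(x)-\int_0^{T^\e_p}Af(Z^{\e,\delta}_u)\,\di u\Big]\Big|\le\eta\,\E_\iota T^\e_p .
\end{equation*}
I would flag that this is the form actually needed: the "Consequently" part is exactly this bound applied via the Markov property of $k\mapsto(Z^{\e,\delta}_{T^\e_k},n_k)$ at time $T^\e_k$, with uniformity in $(x,\iota)$ making it uniform in $k$.
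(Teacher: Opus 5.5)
Your route is the paper's: it too invokes Lemma~\ref{l:A} for $\E_{x,\iota}f(Z^{\e,\delta}_{T^\e_p})-f(x)$, then replaces $Af(x)\,\E_\iota T^\e_p$ by $\E_{x,\iota}\int_0^{T^\e_p}Af(Z^{\e,\delta}_u)\,\di u$ via the Lipschitz bound $|Af(Z^{\e,\delta}_u)-Af(x)|\le\|\nabla(Af)\|\,\|V\|\,T^\e_p/\delta$ (error $\mathcal O(\e^2/\delta)$), and, exactly as you diagnose, this only yields an upper and a lower bound on $\E_{x,\iota}[\,\cdots]$ (``the estimate from below follows analogously''), i.e.\ the inequality with the absolute value outside the expectation, which is all that the conditional version and Lemma~\ref{l:mp} use --- your order-$\sqrt\e$ objection to the $L^1$ form as printed is correct. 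One small correction to your aside: the case $\langle\nabla f,V_i\rangle\equiv0$ does not rescue the $L^1$ form when some $\sigma_i>0$, because the order-$\e$ fluctuations of the quadratic Taylor term and of $Af(x)(T^\e_p-\E_\iota T^\e_p)$, which you point out yourself, still carry an $\eta$-independent constant.
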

\begin{proof}
By Lemma \ref{l:A}, for any $\eta\in(0,1]$ and $\e,\delta\in(0,1]$ small enough
\ba
\E_{x,\iota} f(Z^{\e,\delta}_{T^\e_{p}}) - f(x)
&\leq \Big(A f(x) +\frac{\eta}{2}\Big)\E_{\iota}T_p^\e\\
&= \E_{x,\iota}\int_0^{T_p^\e} \Big(A f(Z^{\e,\delta}_u) + \frac{\eta}{2}\Big)\,\di u
-\E_{x,\iota}\int_0^{T_p^\e} \Big(A f(Z^{\e,\delta}_u) -Af(x)\Big)\,\di u.
\ea
Under assumption $\mathbf{A}_V$, $Af\in C^1_b(\bR^d,\bR)$ for  $f\in C^3_b(\bR^d,\bR)$, so that
for $u\in[0,T_p^\e]$
\ba
|A f(Z^{\e,\delta}_u) -Af(x)| & \leq \|\nabla (Af)\| \max_{1\leq i\leq p}|Z^{\e,\delta}_{T^\e_i} - x|\\
&\leq \|\nabla (Af)\| \|V\|   \frac{1}{\delta }T_p^\e.
\ea
Hence, denoting $\widehat \tau=\tau_1+\cdots+\tau_p$ we estimate
\ba
\E_{x,\iota}\int_0^{T_p^\e} \Big|A f(Z^{\e,\delta}_u) -Af(x)\Big|\,\di u
&\leq \|\nabla (Af)\| \|V\|
\frac{1}{\delta}\E |T_p^\e|^2\\
&=\|\nabla (Af)\| \|V\| \E\widehat\tau^2 \cdot
\frac{\e^2}{\delta}\\
&=\E T_p^\e \cdot \frac{\|\nabla (Af)\| \|V\| \E\widehat\tau^2}{\mu} \frac{\e}{\delta}
\leq\E_{x,\iota}\int_0^{T_p^\e} \frac{\eta}{2}\,\di u
\ea
for $\e,\delta$ small such that $   \frac{\|\nabla (Af)\| \|V\| \E\widehat\tau^2}{\mu} \frac{\e}{\delta}   \leq\frac{\eta}{2}$.
The estimate from below follows analogously.
\end{proof}

\begin{lem}
\label{l:mp}
Let $f\in C^3_b(\bR^d,\bR)$ and $0\leq s<t<\infty$. Then
\ba
\label{e:mp}
\E\Big[ f(Z^{\e,\delta}_{t}) - f( Z^{\e,\delta}_s  ) - \int_s^t Af(Z^{\e,\delta}_u)\,\di u\Big|\rF^{\e,\delta}_s\Big]
\to 0
\ea
in probability as $\e,\delta\to 0$.
\end{lem}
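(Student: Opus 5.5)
We will reduce the continuous-time statement to the discrete cycle estimate of Lemma \ref{l:estA} by localising $s$ and $t$ between arrival times, and then control the boundary pieces by the overshoot bound \eqref{e:R}.

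\emph{Localisation.} Set $\kappa_s:=N^\e_s$ and $\kappa_t:=N^\e_t$; these are stopping times for $(\rF^\e_k)$. Since $Z^{\e,\delta}$ is piecewise constant, we have $Z^{\e,\delta}_s=Z^{\e,\delta}_{T^\e_{\kappa_s-1}}$, and the quantity in \eqref{e:mp} differs from
\[
f(Z^{\e,\delta}_{T^\e_{\kappa_t}})-f(Z^{\e,\delta}_{T^\e_{\kappa_s}})-\int_{T^\e_{\kappa_s}}^{T^\e_{\kappa_t}}Af(Z^{\e,\delta}_u)\,\di u
\]
only by boundary terms. These are of two kinds. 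The first kind are the jumps $|f(Z^{\e,\delta}_{T^\e_{\kappa_s}})-f(Z^{\e,\delta}_s)|$ and $|f(Z^{\e,\delta}_{T^\e_{\kappa_t}})-f(Z^{\e,\delta}_t)|$; each is a single-run increment, bounded by $\|\nabla f\|\|V\|\delta^{-1}\e\,\tau$. The second kind are integrals over the overshoot intervals, bounded by $\|Af\|R^\e_s$ and $\|Af\|R^\e_t$.

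\emph{Boundary terms.} The single-run jumps have expectation $O(\e/\delta)=O(\sqrt\e)$, because the length of the current run is not size-biased in an uncontrolled way. More precisely, the first index after $t$ satisfies $\E\,\Delta T^\e_{\kappa_t}\le \E R^\e_t+\dots$; if needed one bounds $\E\max_{k\le N^\e_T}\Delta_kT^\e$ as in Lemma \ref{l:ucp} via the $(2+\rho)$-moments. The overshoot integrals have expectation at most $\|Af\|\e K$ by \eqref{e:R}. Conditional expectations of nonnegative terms with vanishing $L^1$ norm tend to zero in $L^1$, hence in probability.

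\emph{Main term.} Telescope from $\kappa_s$ over blocks of $p$ steps, $k_m=\kappa_s+mp$, up to the first block endpoint $\ge\kappa_t$. The overshoot of at most $p$ extra steps past $\kappa_t$ is handled like a boundary term, with cost $O(\e/\delta+\e)$ in $L^1$. Write the main term as $\sum_m \xi_m$, where $\xi_m$ is the block increment $f(Z_{T_{k_{m+1}}})-f(Z_{T_{k_m}})-\int Af$. By Lemma \ref{l:estA}, $|\E[\xi_m|\rF_{T_{k_m}}]|\le\eta\,\E[T_{k_{m+1}}-T_{k_m}|\rF_{T_{k_m}}]$. The sum runs up to a stopping index, which is the main obstacle: we must interchange conditioning and a random number of summands. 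We do this by optional stopping, i.e.\ by a Wald-type identity. The process $\sum_{m<n}\big(\xi_m-\E[\xi_m|\rF_{T_{k_m}}]\big)$ is a martingale. Its value at the stopping index $\nu$, defined as the first $m$ with $k_m\ge\kappa_t$, has conditional expectation given $\rF_{T_{\kappa_s}}$ equal to zero. This uses truncation $\nu\wedge n$ and dominated convergence, with integrability supplied by Lemma \ref{l:Etnu} and the bound $|\xi_m|\le C(1+\delta^{-1})\e\widehat\tau_m$.

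Hence the conditional expectation of the main term given $\rF_{T_{\kappa_s}}$ is bounded by $\eta\,\E[T^\e_{k_\nu}-T^\e_{\kappa_s}|\rF_{T_{\kappa_s}}]\le\eta(t-s+\text{overshoot})$. Its expectation is therefore at most $\eta(t-s+C\e)$. Finally, $\rF^{\e,\delta}_s\subset\rF_{T_{\kappa_s}}$ in the relevant sense: the future after $s$ is determined by the chain restarted at $T_{\kappa_s}$, together with the residual run. Conditioning down by the tower property and taking $L^1$ norms yields
\[
\limsup\E\big|\E[\cdots|\rF^{\e,\delta}_s]\big|\le\eta(t-s).
\]
Since $\eta$ is arbitrary, \eqref{e:mp} holds in $L^1$, hence in probability.

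A remaining technical point is that $\rF^{\e,\delta}_s$ need not contain $\rF_{T_{\kappa_s}}$. We therefore condition first on the larger $\sigma$-field generated by $\rF^{\e,\delta}_s$ and the data up to $T^\e_{\kappa_s}$, and then use $\E|\E[Y|\mathcal G]|\le\E|\E[Y|\mathcal H]|$ for $\mathcal G\subset\mathcal H$.
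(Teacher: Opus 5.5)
Your argument is correct. It follows the same overall strategy as the paper: replace $s,t$ by nearby arrival epochs, telescope over complete $p$-cycles, apply Lemma~\ref{l:estA} cycle by cycle, and show that the boundary pieces are $o(1)$ in $L^1$. The bookkeeping, however, is different.

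\textbf{Comparison with the paper.} The paper aligns the cycles with the global epochs $T^\e_{kp}$, so the cycle lengths form an i.i.d.\ renewal sequence. It writes the main term as $\sum_k \bI_{\{T^\e_{kp}\in[s,t)\}}\xi_k$, where each indicator is $\rF^{\e,\delta}_{T^\e_{kp}}$-measurable. The random number of summands is then handled by the tower property applied termwise. All boundary pieces, including the size-biased cycle straddling $s$ or $t$, are controlled by Lorden's bound on the length of the straddling cycle.

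You instead start the blocks at the random index $N^\e_s$. This uses the uniformity of Lemma~\ref{l:estA} in the initial direction, which is available, plus an optional-stopping/Wald argument for the random number of blocks. In exchange, the overshoot of your last block past $t$ is just $R^\e_t$ from \eqref{e:R} plus at most $p-1$ fresh runs, by the strong Markov property at $N^\e_t$. So the main term needs no straddling-block estimate. Your enlargement of $\rF^{\e,\delta}_s$ is harmless, since $\rF^{\e,\delta}_s$ is already contained in the stopped discrete $\sigma$-field at $N^\e_s$.

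\textbf{The loose step.} What you leave vague is the size-biased run straddling $s$ (and $t$). The quantity $\Delta T^\e_{N^\e_s}$ is age plus overshoot, and your ``$\E R^\e_t+\dots$'' hides the age. Second moments alone do not suffice here. For instance, $\E(\Delta T^\e_{N^\e_s})^2\le\E\sum_{k\le N^\e_s}(\Delta_kT^\e)^2=O(\e)$ only gives $\E\Delta T^\e_{N^\e_s}=O(\sqrt\e)$, which becomes $O(1)$ after division by $\delta$.

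Either of two fixes closes the gap:
\begin{itemize}
\item Dominate the straddling run by the straddling aligned cycle $T^\e_{p\widehat N^\e_s}-T^\e_{p\widehat N^\e_s-p}$. This is $O(\e)$ in expectation by Lorden's bound, exactly as in the paper.
\item Use your own fallback: truncate at level $a=\theta\delta$ as in Lemma~\ref{l:ucp}. This gives $\delta^{-1}\E\max_{k\le N^\e_t}\Delta_kT^\e\le\theta+C\theta^{-1-\rho}\e^{\rho/2}$, which genuinely uses the $(2+\rho)$-moments. Since $\theta$ is arbitrary, the boundary terms vanish in $L^1$.
\end{itemize}
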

\begin{proof}
Let
\ba
\widehat N^\e_t&:=\min\{k\in\mathbb N_0 \colon T^\e_{kp}> t\}\\
\ea
be the first passage process associated with the sequence $\widehat T^\e=\{T^\e_{kp}\}_{k\in\mathbb N_0}$.
The latter is a renewal process with iid increments. We replace the deterministic times $s$ and $t$
by the first complete-cycle renewal epochs following them and decompose the expression in
\eqref{e:mp}
as follows:
\ba
\label{e:fdd}
\E\Big[f(Z^{\e,\delta}_t) - f(Z^{\e,\delta}_s) & - \int_s^t Af(Z^{\e,\delta}_u)\,\di u\Big|\rF_s^{\e,\delta}\Big]\\
&=\E\Big[f(Z^{\e,\delta}_{T^\e_{p \widehat N^\e_t}}) - f(Z^{\e,\delta}_{T^\e_{p \widehat N^\e_s}})
- \int^{T^\e_{p \widehat N^\e_t}}_{T^\e_{p \widehat N^\e_s}}  Af(Z^{\e,\delta}_u)\,\di u\Big|\rF_s^{\e,\delta}\Big]\\
&+ \E\Big[f(Z^{\e,\delta}_{T^\e_{p \widehat N^\e_s}}) - f(Z^{\e,\delta}_{s})
- \int_{s}^{T^\e_{p \widehat N^\e_s}} Af(Z^{\e,\delta}_u)\,\di u\Big|\rF_s^{\e,\delta}\Big]\\
&-\E\Big[ f(Z^{\e,\delta}_{T^\e_{p \widehat N^\e_t}}) - f(Z^{\e,\delta}_t)
-\int_{t}^{T^\e_{p \widehat N^\e_t}} Af(Z^{\e,\delta}_u)\,\di u\Big|\rF_s^{\e,\delta}\Big].
\ea
We estimate the first summand in \eqref{e:fdd} with the help of Lemma \ref{l:estA}. Let $\eta\in(0,1]$ be arbitrary. Then,
taking into account that $\{T^\e_{kp}\in[s,t)\} \in \rF^{\e,\delta}_{T^\e_{kp}}$ we get
\ba
\E\Big[f(Z^{\e,\delta}_{T^\e_{p \widehat N^\e_t}}) & - f(Z^{\e,\delta}_{T^\e_{p \widehat N^\e_s}})
- \int^{T^\e_{p \widehat N^\e_t}}_{T^\e_{p \widehat N^\e_s}}  Af(Z^{\e,\delta}_u)\,\di u\Big|\rF_s^{\e,\delta}\Big]\\
&=\E\Big[\sum_{k=0}^\infty\bI_{T^\e_{kp}\in[s,t)} \Big(f(Z^{\e,\delta}_{T^\e_{(k+1)p}})
- f(Z^{\e,\delta}_{T^\e_{kp}})
- \int^{T^\e_{(k+1)p}}_{T^\e_{kp}} Af(Z^{\e,\delta}_u)\,\di u\Big)\Big|\rF_s^{\e,\delta}\Big]\\
&=\sum_{k=0}^\infty\E \Big[\bI_{T^\e_{kp}\in[s,t)} \E\Big[f(Z^{\e,\delta}_{T^\e_{(k+1)p}}) - f(Z^{\e,\delta}_{T^\e_{kp}})
- \int_{T^\e_{kp}}^{T^\e_{(k+1)p}} Af(Z^{\e,\delta}_u)\,\di u\Big|\rF^{\e,\delta}_{T^\e_{kp}} \Big]  \Big|\rF_s^{\e,\delta}\Big]\\
&\leq\sum_{k=0}^\infty
\E \Big[\bI_{T^\e_{kp}\in[s,t)}
\E\Big[ (T^\e_{(k+1)p}-T^\e_{kp})\eta \Big|\rF^{\e,\delta}_{T^\e_{kp}} \Big]  \Big|\rF_s^{\e,\delta}\Big]\\
&\leq \eta (t-s)
+ \eta \E\Big[T^\e_{p \widehat N^\e_t}-T^\e_{p \widehat N^\e_t-p} \Big|\rF_s^{\e,\delta}\Big].
\ea
The last term is estimated from above by means of
the estimate from \cite[Section 2]{lorden1970excess}. We have
\ba
\E_{x,\iota} (T^\e_{p \widehat N^\e_t}-T^\e_{p \widehat N^\e_t-p})\leq 2\frac{\E |T^\e_1|^2}{\E T^\e_1}
=2\frac{\E\widehat \tau^2}{\E \widehat \tau}\e.
\ea
The estimate from below is obtained analogously.

For the second summand in \eqref{e:fdd} we get
\ba
\E_{x,\iota}\Big|f(Z^{\e,\delta}_{T^\e_{p \widehat N^\e_s}}) - f(Z^{\e,\delta}_{s})
&- \int_{s}^{T^\e_{p \widehat N^\e_s}} Af(Z^{\e,\delta}_u)\,\di u\Big|\\
&\leq
\E_{x,\iota}\Big|f(Z^{\e,\delta}_{T^\e_{p \widehat N^\e_s}}) - f(Z^{\e,\delta}_s )\Big|
+\E_{x,\iota} \Big|\int_s^{T^\e_{p \widehat N^\e_s}} Af(Z^{\e,\delta}_u)\,\di u\Big|\\
&\leq
\E_{x,\iota}\max_{i=1,\dots,p}\Big|f(Z^{\e,\delta}_{T^\e_{p \widehat N^\e_s}}) - f(Z^{\e,\delta}_{T^\e_{p \widehat N^\e_s-i}} )\Big|
+\|Af\|\E_\iota (T^\e_{p \widehat N^\e_s}-s)\\
&\leq \|\nabla f\|\|V\|\frac{1}{\delta}\E_{x,\iota} (T^\e_{p \widehat N^\e_s}-T^\e_{p \widehat N^\e_s-p})
+\|Af\|\E_\iota (T^\e_{p \widehat N^\e_s}-s).
\ea
Again, by \cite[Section 2]{lorden1970excess} we have
$
\E_{x,\iota} (T^\e_{p \widehat N^\e_s}-T^\e_{p \widehat N^\e_s-p})\leq 2\frac{\E\widehat \tau^2}{\E \widehat \tau}\e,
$
whereas for the overshoot we have $\E_\iota (T^\e_{p \widehat N^\e_s}-s)\leq  \frac{\E\widehat \tau^2}{\E \widehat \tau} \e$.
Combining these estimates yields
\ba
\E_{x,\iota}\Big|f(Z^{\e,\delta}_{T^\e_{p \widehat N^\e_s}}) - f(Z^{\e,\delta}_{s})
- \int_{s}^{T^\e_{p \widehat N^\e_s}} Af(Z^{\e,\delta}_u)\,\di u\Big|\leq_C  \frac{\e}{\delta} +\e\leq_C\sqrt\e.
\ea
The same estimate holds for the third summand  in \eqref{e:fdd}, and \eqref{e:mp} follows.
\end{proof}

Finally we prove Theorem \ref{t:A}. By section \ref{s:Awrc}, the family $\{Z^{\e,\delta}\}$ is
weakly relatively compact in $D([0,\infty),\bR^d)$. Let $Z$ be any weak limit point.
Passing to the limit in \eqref{e:mp}, after multiplication by bounded continuous functions
of finitely many values of the process up to time $s$, shows that every weak limit $Z$
solves the martingale problem for $A$.
Since the coefficients of $A$ are bounded and sufficiently regular, this martingale problem is well posed.
Hence $Z$ has the law of the diffusion $X$ from Theorem 2.3. Consequently,
$Z^{\e,\delta} \Rightarrow X$ in $D([0,\infty),\bR^d)$.
By Lemma \ref{l:ucp},
$\sup_{t\in [0,T]}|Z^{\e,\delta}_t-X^{\e,\delta}_t|\to 0$ in probability for every $T\in[0,\infty)$.
Hence, the converging-together theorem yields $X^{\e,\delta} \Rightarrow X$ in $D([0,\infty),\bR^d)$.
Since the limit $X$ has continuous paths, this convergence is equivalent to weak convergence
with respect to the topology of uniform convergence on compact time intervals.

The derivation of the SDE \eqref{e:SDEX} is straightforward. Recalling the It\^o--Stratonovich correction, we rewrite the SDE
\eqref{e:SDEX} in the Stratonovich form as
\ba
\label{e:Strat1}
\di X_t= \frac{\varkappa}{\sqrt\mu}  \sum_{i} \sigma_{i} V_i(X_t)\circ \di W^i_t -
\frac{\varkappa^2}{2\mu} \sum_{i} \sigma^2_i  D V_i [ V_i](X_t)\,\di t+
\frac{\varkappa^2}{\mu} \sum_{j< i} \mu_i\mu_j  D V_i [ V_j](X_t)\,\di t.
\ea
Furthermore, we use assumption $\mathbf{A}_{V,\mu}$ to obtain
\ba
0&\equiv D\Big(\sum_i\mu_iV_i\Big)\Big[\sum_j\mu_j V_j \Big]=\sum_{i,j} \mu_i\mu_j  DV_i[V_j]\\
&=\sum_{i} \mu_i^2 DV_i[V_i] + \sum_{j<i} \mu_i\mu_j \Big(DV_i[V_j]+DV_j[V_i]\Big).
\ea
Since $DV_j[V_i]= [V_i,V_j] + DV_i[V_j]$, we get
\ba
0= \sum_{i}\mu_i^2 DV_i[V_i] + 2\sum_{j<i} \mu_i\mu_j  DV_i[V_j] + \sum_{j<i} \mu_i\mu_j  [V_i,V_j]
\ea
or, equivalently,
\ba
\label{e:DL}
\sum_{j<i} \mu_i\mu_j  DV_i[V_j] = -\frac12 \sum_{i}\mu_i^2 DV_i[V_i] -\frac12 \sum_{j<i} \mu_i\mu_j  [V_i,V_j].
\ea
Substituting \eqref{e:DL} into \eqref{e:Strat1} yields
\ba
\di X_t
&= \frac{\varkappa}{\sqrt\mu}  \sum_{i} \sigma_{i} V_i(X_t)\circ \di W^i_t -
\frac{\varkappa^2}{2\mu} \sum_{i} (\sigma^2_i+\mu_i^2)  D V_i [ V_i](X_t)\,\di t
-\frac{\varkappa^2}{2\mu}\sum_{j<i} \mu_i\mu_j  [V_i,V_j](X_t)\,\di t.
\ea
Finally, using $\sigma^2_i + \mu_i^2=\lambda^2_i$ we obtain \eqref{e:XStrat}.

\section{Diffusion approximation for the gliding dynamics\label{s:gliding}}

The proof of Theorem \ref{t:B} follows the same scheme as that of Theorem \ref{t:A}.

Let $\Psi_i^\delta(y,t)$, $i=1,\dots,p$, be the unique solution of the ODE
\ba
\label{e:Psi}
\dot \Psi_i^\delta=\frac{1}{\delta}V_i(\Psi^\delta_i),\quad \Psi_i^\delta(y,0)=y\in\bR^d,\quad t\in[0,\infty).
\ea
Then, on each interval $t\in[T^\e_k,T^\e_{k+1}]$, $k\in\mathbb N_0$, we have
\ba
Y^{\e,\delta}_t=\Psi_{n_k}^\delta(Y^{\e,\delta}_{T^\e_k},t-T_k^\e).
\ea
Since the vector fields $V_i$ are bounded,
\ba
|Y^{\e,\delta}_t - Y^{\e,\delta}_{T^\e_k}|=
\frac{1}{\delta} \Big|\int_{0}^{t -T^\e_k} V_{n_k}(\Psi^\delta_{n_k}(Y^{\e,\delta}_{T^\e_k} ,s  ))\,\di s  \Big|
\leq   \frac{\|V_{n_k}\|}{\delta}(t-T^\e_k),\quad t\in[T^\e_k,T^\e_{k+1}],
\ea
which is the same within-run estimate as for $X^{\e,\delta}$.
Together with the centering condition and the Taylor estimates for the flows,
this allows the compact-containment and Aldous arguments of section~\ref{s:Awrc} to be repeated for the embedded step process
$\widehat Z^{\e,\delta}_t:=Y^{\e,\delta}_{T_k^\e}$, $t\in [T_k^\e,T^\e_{k+1})$,
$k\in\mathbb N_0$, with only notational modifications.
Once the analogue of the pseudo-characteristic operator approximation
is established below, the martingale-problem argument of section \ref{s:Aconv} applies with $B$ in place of $A$.

The only essential modification concerns the expansion of the $p$-step pseudo-characteristic operator,
since in the gliding model the particle follows the flow of the current vector field during each run.
In contrast to the flight dynamics, the vector field is not frozen during a run, and its variation
along its own flow generates an additional second-order contribution.

For $f\in C^3_b(\bR^d,\bR)$ define
\ba
B_\iota^{\e,\delta }f(y)&=\frac{\E_{y,\iota} f(Y^{\e,\delta}_{T^\e_{p}}) - f(y)}{\E_\iota T_p^\e }.
\ea

\begin{lem}
\label{l:B}
For every $f\in C^3_b(\bR^d,\bR)$ we have
\ba
\label{e:Biota}
\lim_{\e,\delta \downarrow 0} \sup_{y\in\mathbb R^d,\iota\in\{1,\dots,p\}}
|B_\iota^{\e,\delta }f(y) -B f(y)|=0,
\ea
where the operator $B$ is defined in \eqref{e:opB}.
\end{lem}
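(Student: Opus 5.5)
We follow the structure of the proof of Lemma \ref{l:A}. The only change is that each single-run increment is now generated by the flow $\Psi^\delta_i$ from \eqref{e:Psi} rather than by a straight line. Let $\iota=1$ and write $Y_{i}:=Y^{\e,\delta}_{T^\e_{i}}$. We telescope
\[
\E_{y,1}f(Y_p)-f(y)=\sum_{i=1}^p\E_{y,1}\bigl[f(\Psi^\delta_i(Y_{i-1},\Delta_iT^\e))-f(Y_{i-1})\bigr].
\]
For each run we expand $s\mapsto f(\Psi^\delta_i(z,s))$ by Taylor's formula in time up to second order. Set $L_if:=\langle\nabla f,V_i\rangle$ and $g_i:=L_if$ as in \eqref{e:g}. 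Then $\frac{\di}{\di s}f(\Psi_i^\delta)=\frac1\delta g_i(\Psi^\delta_i)$ and $\frac{\di^2}{\di s^2}f(\Psi_i^\delta)=\frac1{\delta^2}L_i^2f(\Psi^\delta_i)$. Since $f\in C^3_b$ and $V_i\in C^2_b$, we have $L_i^2f=\langle\nabla^2fV_i,V_i\rangle+\langle\nabla f,DV_i[V_i]\rangle\in C^1_b$. This gives
\[
f(\Psi^\delta_i(z,s))-f(z)=\frac{s}{\delta}g_i(z)+\frac{1}{\delta^2}\int_0^s(s-r)\,L_i^2f(\Psi^\delta_i(z,r))\,\di r .
\]
Compared with the flight case, the only new feature is that the second-order term contains $L_i^2f$ instead of $\langle\nabla^2 fV_i,V_i\rangle$. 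This is exactly where the extra drift $\frac{\varkappa^2}{2\mu}\sum_i\lambda_i^2\langle\nabla f,DV_i[V_i]\rangle$ will come from.

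\emph{First-order terms.} We treat them exactly as in part 1a) of the proof of Lemma \ref{l:A}. Independence of $\Delta_iT^\e$ from $Y_{i-1}$ and the centering condition $\sum_i\mu_ig_i\equiv0$ reduce them to
\[
\frac{\e}{\delta}\sum_{i}\mu_i\sum_{j<i}\E_{y,1}\bigl[g_i(Y_j)-g_i(Y_{j-1})\bigr].
\]
We then expand $g_i(\Psi^\delta_j(Y_{j-1},\Delta_jT^\e))-g_i(Y_{j-1})$ to first order along the flow $\Psi_j^\delta$. This produces $\frac{\e^2}{\delta^2}\mu_j\langle\nabla g_i(y),V_j(y)\rangle$ plus a remainder. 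For the remainder we use the within-run bound $|\Psi^\delta_j(z,r)-z|\le\|V\|r/\delta$ and Lipschitz continuity of $\langle\nabla g_i,V_j\rangle$, and obtain the bound $\leq_C\e^3/\delta^3+\e^3/\delta^4\leq_C\e$, exactly as before.

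\emph{Second-order terms.} Here we replace $L_i^2f(\Psi^\delta_i(Y_{i-1},r))$ by $L_i^2f(y)$. By the Hölder-$\rho$ bound used in part 1b), the error is at most $\leq_C|\Psi^\delta_i(Y_{i-1},r)-y|^\rho\le(\|V\|\sum_{j\le i}\Delta_jT^\e/\delta)^\rho$. This yields the main term $\frac{\e^2}{2\delta^2}\sum_i\lambda_i^2L_i^2f(y)$ with a remainder $\leq_C\e^{1+\rho/2}$, using the $(2+\rho)$-moments of the run times. We divide by $\E T_p^\e=\mu\e$ and use $\mathbf{A}_\varkappa$. We then expand $\langle\nabla g_i,V_j\rangle$, apply the identity \eqref{e:H2} to turn the Hessian part $\lambda_i^2$ into $\sigma_i^2$, and keep the first-order part $\lambda_i^2\langle\nabla f,DV_i[V_i]\rangle$ untouched. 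The result is exactly \eqref{e:opB}, and all estimates are uniform in $y$.

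\emph{Independence of $\iota$.} It remains to check that the limit does not depend on the initial direction. The $\lambda_i^2$ terms are symmetric under cyclic shifts. The cross terms differ between $\iota=1$ and $\iota=2$ by the same expression $A_2f-A_1f$ as in part 3 of Lemma \ref{l:A}, which vanishes by $\mathbf{A}_{V,\mu}$.

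The main obstacle is keeping the remainder estimates uniform while the vector field varies during each run. The key point is that every remainder only needs the displacement bound $|\Psi^\delta_i(z,r)-z|\le\|V\|r/\delta$ together with $C^1_b$ regularity of $L_i^2f$ and of $\langle\nabla g_i,V_j\rangle$. Both follow from $f\in C^3_b$ and $V_i\in C^2_b$, so the moment bookkeeping of Lemma \ref{l:A} carries over verbatim.
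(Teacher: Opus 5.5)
Your proposal is correct and follows essentially the paper's route: your time-Taylor formula with integral remainder $\frac{1}{\delta^2}\int_0^s(s-r)\,L_i^2f(\Psi_i^\delta(z,r))\,\di r$ is exactly the paper's split into the frozen first-order term plus the within-run correction involving $\langle\nabla g_i,V_i\rangle$ (its double integral in $\theta,\theta'$ is the same expression after a change of variables), and after that you use the same centering, \eqref{e:H2} and cyclic-shift arguments. One bookkeeping point, inherited from the write-up of Lemma~\ref{l:A}: since $\E_{y,1}|Y_{j-1}-y|\leq_C\e/\delta$, the cross-term remainder is actually $\leq_C\e^3/\delta^3=\mathcal O(\e^{3/2})$ (the $\e^3/\delta^4$ term is spurious), and you need this sharper bound, because a remainder that is only $\leq_C\e$ would not vanish after division by $\E_1 T_p^\e=\mu\e$.
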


\begin{proof}

We follow the proof of Lemma \ref{l:A} and indicate only the terms that differ in the gliding case.
For definiteness, we first take $\iota=1$.

Let $f\in C^3_b(\bR^d,\bR)$ and
$y\in\bR^d$. Using the flow equation \eqref{e:Psi}
we decompose the increment into the contribution obtained by freezing the vector
field at the beginning of the run and a correction accounting for its variation along the flow.
As in \eqref{e:g}, let $g_i(y):=\langle \nabla f(y),V_i(y)\rangle$, $i=1,\dots, p$
and write
\ba
\E_{y,1}f(Y^{\e,\delta}_{T_p^\e}) -f(y)
&=\sum_{i=1}^{p} \E_{y,1}\Big[f(Y^{\e,\delta}_{T_{i}^\e})- f(Y^{\e,\delta}_{T_{i-1}^\e})\Big]\\
&= \sum_{i=1}^{p}\E_{y,1}
\Big[f (\Psi_{i}^\delta (Y^{\e,\delta}_{T_{i-1}^\e},\Delta T^\e_i))
-f (Y^{\e,\delta}_{T_{i-1}^\e} )\Big]\\
&= \frac{1}{\delta}\sum_{i=1}^{p}
\E_{y,1}\int_0^1 g_i(\Psi^\delta_i (Y^{\e,\delta}_{T_{i-1}^\e},\theta \Delta T_{i}^\e)) \Delta T_{i}^\e \,\di \theta \\
&= \frac{1}{\delta}\sum_{i=1}^{p}\E_{y,1}\Big[ g_i(Y^{\e,\delta}_{T_{i-1}^\e})\Delta T_{i}^\e\Big] \\
&+ \frac{1}{\delta}\sum_{i=1}^{p}
\E_{y,1}\int_0^1 \Big( g_i(\Psi^\delta_i (Y^{\e,\delta}_{T_{i-1}^\e},\theta\Delta T_{i}^\e))
- g_i(Y^{\e,\delta}_{T_{i-1}^\e}) \Big) \Delta T_{i}^\e \,\di \theta.
\ea
The first term has exactly the same form as the corresponding first-order contribution in the flight model.
The second term is specific to the gliding dynamics.
\ba
\frac{1}{\delta}
\E_{y,1}\int_0^1 &\Big( g_i(\Psi^\delta_i (Y^{\e,\delta}_{T_{i-1}^\e},\theta\Delta T_{i}^\e))
-  g_i(Y^{\e,\delta}_{T_{i-1}^\e}) \Big) \Delta T_{i}^\e \,\di \theta \\
&=
\frac{1}{\delta^2}
\E_{y,1}\int_0^1\int_0^1 \langle \nabla g_i(\Psi^\delta_i (Y^{\e,\delta}_{T_{i-1}^\e},\theta'\theta \Delta T_{i}^\e)),
V_i(  \Psi^\delta_i (Y^{\e,\delta}_{T_{i-1}^\e},\theta'\theta \Delta T_{i}^\e )) \rangle \theta(\Delta T_{i}^\e)^2
\,\di \theta' \,\di \theta.
\ea
Expanding first along the $i$-th flow and then replacing $Y^{\e,\delta}_{T_{i-1}^\e}$
by $y$, with the resulting error estimated as in Lemma~\ref{l:A}, we obtain
\ba
\cdots &=
\frac{\e^2}{2\delta^2}\lambda_i^2  \langle \nabla g_i(y),V_i(y) \rangle
+\E_{y,1} R^{\e,\delta}_i.
\ea
Therefore, we obtain (cf.\ the analogous expansion \eqref{e:opA1} for the operator $A^{\e,\delta}_1$)
\ba
\frac{\E_{y,1}f(Y^{\e,\delta}_{T_p^\e}) -f(y)}{\E_1 T^\e_p}
&=\frac{\varkappa^2}{\mu}
\Big(
\sum_{j<i} \mu_i\mu_j \langle \nabla \langle \nabla f(x),  V_i (x)\rangle, V_j(x) \rangle
+\frac{1}{2}  \sum_i  \lambda_i^2 \langle  \langle\nabla f(x),V_i(x)\rangle    , V_i (x)\rangle \Big)
+ r^{\e,\delta}_{x,1}.
\ea
Applying the identity
\ba
\langle \nabla \langle \nabla f,  V_i\rangle, V_j \rangle
= \langle \nabla^2 f  V_i , V_j \rangle + \langle \nabla f,  D V_i[V_j] \rangle, \quad 1\leq i,j\leq p,
\ea
yields the operator $B$ as defined in \eqref{e:opB}. The gliding dynamics produces an additional
$D V_i[V_i]$ drift component.
The remainder $R^{\e,\delta}_i$
is estimated in the same way as the corresponding Taylor remainders in Lemma \ref{l:A}. In particular,
its contribution after division by $\E_\iota T_p^\e$ converges to zero uniformly in $y$.
The argument for a general initial direction $\iota$ is identical to the cyclic-shift argument at the end of the proof of Lemma \ref{l:A}.
This proves the lemma.
\end{proof}

\begin{rem}
The difference between the limiting generators $A$ and $B$ originates
from the evolution of each vector field along its own flow. In the flight model
the vector field is frozen during a run, whereas in the gliding model this within-run variation
produces the additional $DV_i[V_i]$ contribution to the effective drift.
Thus, even for the same vector fields, run times, and cyclic switching rule,
the microscopic realization of the motion remains visible in the diffusion limit.
\end{rem}

Finally, we complete the proof of Theorem \ref{t:B} analogously to that of Theorem \ref{t:A}.
Indeed, every weak limit of the embedded processes $\widehat Z^{\e,\delta}$
solves the martingale problem for $B$. Since this martingale problem is well posed,
the embedded processes converge weakly to the diffusion $Y$ of Theorem \ref{t:B}.
Finally, the analogue of Lemma \ref{l:ucp} and the converging-together theorem yield $Y^{\e,\delta}\Rightarrow Y$,
which completes the proof.

The derivation of the SDEs \eqref{e:SDEY} and  \eqref{e:SDEYStrat}
proceeds analogously to the derivation in Theorem \ref{t:A}.

\section*{Acknowledgments}
O.A.\ acknowledges funding from the DFG project AR 1717/2-1 (548113512).

\section*{Disclaimer}
Spelling and grammar were checked, and the Python code used to simulate the
trajectories presented in Figure~\ref{f:planar} was developed, with assistance from ChatGPT.


\end{document}